\begin{document}
\setlength{\baselineskip}{16pt}

\parindent 0.5cm
\evensidemargin 0cm \oddsidemargin 0cm \topmargin 0cm \textheight 22.5cm \textwidth 16cm \footskip 2cm \headsep
0cm

\newtheorem{theorem}{Theorem}[section]
\newtheorem{lemma}{Lemma}[section]
\newtheorem{proposition}{Proposition}[section]
\newtheorem{definition}{Definition}[section]
\newtheorem{example}{Example}[section]
\newtheorem{corollary}{Corollary}[section]

\newtheorem{remark}{Remark}[section]

\numberwithin{equation}{section}

\def\p{\partial}
\def\I{\textit}
\def\R{\mathbb R}
\def\C{\mathbb C}
\def\u{\underline}
\def\l{\lambda}
\def\a{\alpha}
\def\O{\Omega}
\def\e{\epsilon}
\def\ls{\lambda^*}
\def\D{\displaystyle}
\def\wyx{ \frac{w(y,t)}{w(x,t)}}
\def\imp{\Rightarrow}
\def\tE{\tilde E}
\def\tX{\tilde X}
\def\tH{\tilde H}
\def\tu{\tilde u}
\def\d{\mathcal D}
\def\aa{\mathcal A}
\def\DH{\mathcal D(\tH)}
\def\bE{\bar E}
\def\bH{\bar H}
\def\M{\mathcal M}
\renewcommand{\labelenumi}{(\arabic{enumi})}

\def\disp{\displaystyle}
\def\undertex#1{$\underline{\hbox{#1}}$}
\def\card{\mathop{\hbox{card}}}
\def\sgn{\mathop{\hbox{sgn}}}
\def\exp{\mathop{\hbox{exp}}}
\def\OFP{(\Omega,{\cal F},\PP)}
\newcommand\JM{Mierczy\'nski}
\newcommand\RR{\ensuremath{\mathbb{R}}}
\newcommand\CC{\ensuremath{\mathbb{C}}}
\newcommand\QQ{\ensuremath{\mathbb{Q}}}
\newcommand\ZZ{\ensuremath{\mathbb{Z}}}
\newcommand\NN{\ensuremath{\mathbb{N}}}
\newcommand\PP{\ensuremath{\mathbb{P}}}
\newcommand\abs[1]{\ensuremath{\lvert#1\rvert}}

\newcommand\normf[1]{\ensuremath{\lVert#1\rVert_{f}}}
\newcommand\normfRb[1]{\ensuremath{\lVert#1\rVert_{f,R_b}}}
\newcommand\normfRbone[1]{\ensuremath{\lVert#1\rVert_{f, R_{b_1}}}}
\newcommand\normfRbtwo[1]{\ensuremath{\lVert#1\rVert_{f,R_{b_2}}}}
\newcommand\normtwo[1]{\ensuremath{\lVert#1\rVert_{2}}}
\newcommand\norminfty[1]{\ensuremath{\lVert#1\rVert_{\infty}}}
\newcommand{\ds}{\displaystyle}

\title{Finite-time blow-up prevention by logistic source in parabolic-elliptic chemotaxis
models with singular sensitivity in any dimensional setting}
\author{
Halil Ibrahim Kurt and Wenxian Shen   \thanks{Partially supported by the NSF grant DMS--1645673.}\\
Department of Mathematics and Statistics\\
Auburn University\\
Auburn University, AL 36849\\
U.S.A. }

\date{}
\maketitle

\begin{abstract}
{In recent years, {a lot of attention has been drawn} to the question of whether  logistic kinetics is sufficient to enforce the global existence of classical solutions or to prevent finite-time blow-up in various chemotaxis models.
However, for several important chemotaxis models,  only  in the space two dimensional setting, it has been
shown that  logistic kinetics is sufficient to enforce the global existence of classical solutions
(see  \cite{FuWiYo1} and \cite{TeWi1}).
The current paper is to study the above question for    the following parabolic-elliptic chemotaxis system with singular sensitivity and logistic source in any space dimensional setting,
\begin{equation}
    \begin{cases}
u_t=\Delta u-\chi\nabla\cdot (\frac{u}{v} \nabla v)+u(a(x,t)-b(x,t) u^{1+\sigma}),\quad &x\in \Omega\cr
0=\Delta v-\mu v+\nu u,\quad &x\in \Omega \quad \cr
\frac{\p u}{\p n}=\frac{\p v}{\p n}=0,\quad &x\in\p\Omega,
\end{cases}
\end{equation}
where $\Omega \subset \mathbb{R}^n$ is a bounded domain with  smooth boundary $\p\Omega$, $\chi$ is the singular chemotaxis sensitivity coefficient,  $a(x,t)$ and $b(x,t)$ are positive smooth functions,
$\mu,\nu$ are positive constants, and $\sigma\ge 0$. {When} $\sigma>0$, we prove that,  for every given nonnegative initial data $0\not\equiv  u_0\in C^0(\bar \Omega)$, (0.1) has a unique globally defined   classical solution $(u_\sigma(x,t;u_0),v_\sigma(x,t;u_0))$
with $u_\sigma(x,0;u_0)=u_0(x)$,  which shows that, in any space dimensional setting,  strong logistic
kinetics is sufficient to enforce  the global existence of classical solutions and hence prevents the occurrence of finite-time blow-up even for arbitrarily
large $\chi$. In addition, the solutions are shown to be uniformly bounded under the conditions \begin{equation*}
    a_{\inf}>
\begin{cases}
\frac{\mu \chi^2}{4}, &\text{if $0< \chi \leq 2,$}\\
\mu(\chi-1), &\text{if $\chi>2$.}\\
\end{cases}
\end{equation*}
When $\sigma=0$,  we show that  the classical solution $(u(x,t;u_0,0),v(x,t;u_0,0))$ exists globally and stays bounded provided that both $a(x,t)$ and  $u_0(x)$ are not small.}
\end{abstract}

\medskip

\noindent {\bf Key words.} Parabolic-elliptic chemotaxis system,   logistic source,  singular sensitivity, blow-up prevention,  classical solution, local existence, global existence, global boundedness.

\medskip

\medskip

{\noindent {\bf AMS subject classifications.} 35K51, 35K57, 35Q92, 92C17.}

\newpage
\section{Introduction and Main Results}
\label{S:intro}

Chemotaxis refers to the movement of living organisms in response to certain chemicals in their
environments, and plays a crucial role in a wide range of biological phenomena such as immune system response, embryo development, tumor growth, population dynamics, gravitational collapse, etc. (see  \cite{ISM04, DAL1991}).
Chemotaxis models, also known as Keller-Segel models, have been widely studied since the pioneering works \cite{KS1970, KS71} by  Keller and Segel at the beginning of 1970s on the mathematical modeling of  the aggregation process of Dictyostelium discoideum. One of the central problems studied in the literature on chemotaxis models is whether solutions blow up in finite time or exist globally.

In recent years, a large amount of research has been carried out toward the finite-time blow-up prevention by logistic source in various chemotaxis models. For example, consider the following chemotaxis model with logistic source,
  \begin{equation}
\label{main-eq0}
\begin{cases}
u_t=\Delta u- \chi\nabla \cdot (u\nabla v)+u(a(x,t)-b(x,t)u),\quad &x\in \Omega \cr
\tau v_t=\Delta v-\mu v+\nu u,\quad &x\in \Omega\cr
\frac{\p u}{\p n}=\frac{\p v}{\p n}=0,\quad &x\in \partial\Omega,
\end{cases}
\end{equation}
where $\Omega\subset\mathbb{R}^N$ is a bounded domain with smooth
boundary, $u(x,t)$ represents the population density of a species and $v(x,t)$ represents the population density of some chemical substance, $\chi$ is the chemotaxis  sensitivity coefficient, $a,b$ are positive continuous functions,    $\tau\ge 0$ is a non-negative constant linked to the diffusion rate of the chemical substance, and $\mu>0$  represents the degradation rate of the  chemical substance  and $\nu>0$ is the rate at which the  species produces the chemical substance.  When $\tau=0,$  $a(x,t)\equiv a$, $b(x,t)\equiv b$, $\mu=\nu=1$,  it is proved in \cite{TeWi1} that,  if $N\le 2$ or  $b>\frac{N-2}{N}\chi$, then for every nonnegative initial data $u_0\in C^0(\bar\Omega)$, \eqref{main-eq0}  possesses a global bounded classical solution which is unique. It should be pointed out that, when $a=b=0$ and $N\ge 2$,
finite-time blow-up of positive  solutions occurs under some condition on the mass and the moment of the initial data   (see \cite{HeMeVe}, \cite{HeVe},  \cite{Nag2}, \cite{NaSe3}). Hence the finite time blow-up phenomena in \eqref{main-eq0} is suppressed to some extent by  the logistic source. But it remains open whether in any space dimensional setting, for every nonnegative initial data  $u_0\in C^0(\bar\Omega)$ \eqref{main-eq0}   possesses a unique global  classical solution for every $\chi>0$ and every $b$ with $\inf_{x\in\bar\Omega,t\in\R} b>0$. It should be  pointed out that finite-time blow-up  occurs in  various variants of \eqref{main-eq0}, for example, it occurs in \eqref{main-eq0} with $\tau=0$, with  the logistic source being replaced by logistic-type superlinear degradation (see \cite{TaYo, Win5}), and/or with the second equation being replaced by the following one,
\begin{equation*}
    0=\Delta v-\frac{1}{|\Omega|}\int_\Omega u(\cdot,t)+u,\quad x\in\Omega
\end{equation*}
(see \cite{BlFuLa, Fue1, Fue2, Win2}). The reader  is referred to \cite{ChTe, IsSh, Lan0, Lan, TaWi, Tel, Vig, Win-1} and references therein for other studies on the global existence of nonnegative solutions of \eqref{main-eq0}.

Consider the following  chemotaxis system with singular sensitivity and logistic source,
\begin{equation}
\label{main-eq00}
\begin{cases}
u_t=\Delta u-\chi\nabla\cdot (\frac{u}{v} \nabla v)+u(a(x,t)-b(x,t){u^{1+\sigma}}) ,\quad &x\in \Omega\cr
\tau v_t=\Delta v-\mu v+\nu u,\quad &x\in \Omega \quad \cr
\frac{\p u}{\p n}=\frac{\p v}{\p n}=0,\quad &x\in\p\Omega.
\end{cases}
\end{equation}
When $a(x,t)\equiv a>0$,  $b(x,t)\equiv b>0$, $\tau =0$,  {$\sigma=0,$} and $\mu=\nu=1$,
it is proved in \cite{FuWiYo1} that, if $N=2$, then  \eqref{main-eq00} has a unique global solution with any nonnegative initial data $0\not\equiv u_0\in C^0(\bar \Omega)$. When $a(x,t)\equiv 0$, $b(x,t)\equiv 0$, $\tau=0$, {$\sigma=0,$} and $\mu=\nu=1$, it is proved in \cite{FuSe1} that, if $N=2$, then   \eqref{main-eq00} has a unique global solution with any nonnegative initial data $0\not\equiv u_0\in C^0(\bar \Omega)$. It should be pointed out that, when $a(x,t)\equiv 0$, $b(x,t)\equiv 0$, and $N\ge 3$, there exists some nonnegative initial data $u_0\in C^0(\bar \Omega)$  such that the solution of \eqref{main-eq00} blows up at some finite time (see \cite{NaSe3}). It remains open whether in any space dimensional setting, for every nonnegative initial data $0\not\equiv u_0\in C^0(\bar\Omega)$ \eqref{main-eq00}   possesses a unique global  classical solution  for every $\chi>0$ and every $b$ with $\inf_{x\in\bar\Omega,t\in\R} b>0$. The reader is referred to  \cite{Bil, Bla, DiWaZh, Fuj, FuSe2, FuSe3,  FuWiYo, FuYo, NaSe3, Win4, ZhZh, ZhZh1} and references therein for other studies on the global existence of nonnegative solutions of \eqref{main-eq00}.

The objective of this paper is to investigate the finite-time blow-up prevention by logistic source in the chemotaxis model \eqref{main-eq00} with   $\tau=0$ in any space dimensional setting, that is,
\begin{equation}
\label{main-eq}
\begin{cases}
u_t=\Delta u-\chi\nabla\cdot (\frac{u}{v} \nabla v)+u(a(x,t)-b(x,t){u^{1+\sigma}}) ,\quad &x\in \Omega\cr
0=\Delta v-\mu v+\nu u,\quad &x\in \Omega \quad \cr
\frac{\p u}{\p n}=\frac{\p v}{\p n}=0,\quad &x\in\p\Omega.
\end{cases}
\end{equation}
When $\sigma>0$, $u(a(x,t)-b(x,t)u^{1+\sigma})$ is referred to as a {\it strong logistic source}, and when $\sigma=0$, $u(a(x,t)-b(x,t)u)$ is referred to as a {\it regular logistic source}. We will  show that, for every $N\ge 1$, a strong logistic source  is sufficient to enforce global existence of positive classical solutions of \eqref{main-eq} and hence prevents the occurrence of finite-time blow-up even for arbitrarily large $\chi$. In the regular logistic source case, we will show that  finite-time blow-up does not occur  provided that $a(x,t)$ is  large  relative to $\chi$ and the initial function $u_0(x)$ is not too small.

To be more precise, we assume throughout this paper  that  $\chi, \nu$, and $\mu$ are positive constants; {$\sigma \ge 0$;} $\Omega\subset\RR^N$ is a smooth bounded domain;
  the initial function $u_0(x)$  satisfies
\begin{equation}
\label{initial-cond-eq}
u_0 \in C^0(\bar{\Omega}), \quad u_0 \ge 0, \quad {\rm and} \quad \int_\Omega u_0 >0;
\end{equation}
$a(x,t)$ and $b(x,t)$ are H\"older continuous in $t\in\RR $ with exponent $\gamma>0$ uniformly with respect to $x\in\bar\Omega$, continuous in $x\in\bar\Omega$ uniformly
with respect to $t\in\RR$, and there are positive constants $\alpha_i$, $A_i$ $(i=1,2)$ such that
{\begin{equation}
    \label{eq-1-6}
    0< \alpha_1\leq a(x,t) \leq A_1\quad \text{and} \quad
0< \alpha_2\leq {b(x,t)} \leq A_2.
\end{equation}}
We put
\begin{equation*}
a_{\inf}=\inf _{ x \in\bar{\Omega},t\in\R}a(x,t),\quad a_{\sup}=\sup _{x \in\bar{\Omega},t\in\R}a(x,t),
\end{equation*}
and
\begin{equation*}
b_{\inf}=\inf _{x \in\bar{\Omega},t\in\R}b(x,t),\quad b_{\sup}=\sup _{x \in\bar{\Omega},t\in\R}b(x,t),
\end{equation*}
unless specified otherwise. By the arguments in \cite[Lemma 2.2]{FuWiYo1}, we have the following proposition on the existence and uniqueness of the solution of \eqref{main-eq} with given initial function $u_0$ satisfying \eqref{initial-cond-eq}.

\begin{proposition}
\label{main-prop}(Local existence)
Suppose that \eqref{initial-cond-eq} holds. Then there is $T_{\max}^\sigma(u_0)\in (0,\infty]$
such that the system \eqref{main-eq} has a unique classical solution { $(u_{\sigma}(x,t;u_0),v_{\sigma}(x,t;u_0))$, on $(0,T_{\max}^\sigma(u_0))$} satisfying that
\begin{equation*}
    \lim_{t\to 0}\|{ u_{\sigma}(\cdot,t;u_0)}-u_0(\cdot)\|_{C^0(\bar\Omega)}=0,
\end{equation*}
and
\begin{equation*}
u(\cdot,\cdot;u_0)\in  C(\bar \Omega \times(0,T_{\max}(u_0)))\cap C^{2,1}( \bar \Omega \times (0,T_{\max}(u_0))),
\end{equation*}
and
\begin{equation*}
    v(\cdot,\cdot;u_0)\in  C^{2,0}( \bar \Omega \times (0,T_{\max}(u_0))).
\end{equation*}
Moreover if $T_{\max}^\sigma(u_0)< \infty,$ then
\begin{equation*}
\limsup_{t \nearrow { T_{\max}^\sigma(u_0)}} \left\| {u_{\sigma}(\cdot,t;u_0)} \right\|_{C^0(\bar \Omega)}  =\infty \quad or \quad \liminf_{t \nearrow { T_{\max}^\sigma(u_0)}} \inf_{x \in \Omega} { v_{\sigma}(\cdot,t;u_0)}=0.
\end{equation*}
\end{proposition}

For given $u_0\in C^0(\bar\Omega)$ satisfying \eqref{initial-cond-eq},  the unique solution { $(u_{\sigma}(x,t;u_0), v_{\sigma}(x,t;v_0))$} of \eqref{main-eq} with given initial function $u_0$ is said to be {\it locally mass persistent} if {for each $ 0<T< \infty$},
\begin{equation}
\label{ln-u-estimate-eq1-1}
\inf_{t\in [0,\min\{T,T_{\max}^{\sigma}(u_0)\})}\int_\Omega u_{\sigma}(x,t;u_0)dx>0,
\end{equation}
and
\begin{equation}
\label{ln-u-estimate-eq1-2}
\inf_{x\in\Omega,t\in [0,\min\{T,T_{\max}^{\sigma}(u_0)\})} v_{\sigma}(x,t;u_0)>0,
\end{equation} and {\it mass persistent} if \eqref{ln-u-estimate-eq1-1} and \eqref{ln-u-estimate-eq1-2} hold for each $0<T\le \infty$.  $(u_{\sigma}(x,t;u_0), v_{\sigma}(x,t;v_0))$ is said to be {\it globally defined} if  $T^{\sigma}_{\max}(u_0)=\infty$. If $(u_{\sigma}(x,t;u_0), v_{\sigma}(x,t;v_0))$ is globally defined, it is said to be {\it bounded} if
$\sup_{t\ge 0,x\in\Omega} u_{\sigma}(x,t;u_0)<\infty$, which implies $\sup_{t\ge 0,x\in\Omega} v_{\sigma}(x,t;u_0)<\infty$.

By properly modified arguments of \cite{FuWiYo1}, we  have the following proposition on the mass persistence
of  the unique solution of \eqref{main-eq} with given initial function $u_0$ satisfying
\eqref{initial-cond-eq}.

\begin{proposition}
\label{mass-persistence-prop} (Mass persistence)
{ Suppose that $u_0$ satisfying \eqref{initial-cond-eq}.

\begin{itemize}
\item[(1)]
For any $0<T<\infty$, \eqref{ln-u-estimate-eq1-1} and \eqref{ln-u-estimate-eq1-2} hold.

\item[(2)] If \begin{equation}
    \label{def-ainf-mu-chi}
    a_{\inf}>
\begin{cases}
\frac{\mu \chi^2}{4}, &\text{if $0< \chi \leq 2,$}\\
\mu(\chi-1), &\text{if $\chi>2$}\\
\end{cases}
\end{equation}
holds, then
\eqref{ln-u-estimate-eq1-1} and \eqref{ln-u-estimate-eq1-2} hold for any $0<T\le \infty$.
\end{itemize}}
\end{proposition}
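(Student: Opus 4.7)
My plan for part~(1) is to integrate the first equation of \eqref{main-eq} over $\Omega$: since the Neumann boundary conditions annihilate both the Laplacian and the chemotactic flux, one obtains
\begin{equation*}
\frac{d}{dt}\int_\Omega u\,dx=\int_\Omega u(a-bu)\,dx.
\end{equation*}
On $[0,T]$ with $T<T_{\max}(u_0)$, Proposition~\ref{main-prop} supplies continuity (and thus a bound $u\le M_T$) on $\bar\Omega\times[0,T]$, so $\frac{d}{dt}\int_\Omega u\ge (a_{\inf}-b_{\sup}M_T)\int_\Omega u$, and Gronwall yields $\int_\Omega u(\cdot,t)\ge e^{(a_{\inf}-b_{\sup}M_T)t}\int_\Omega u_0>0$ on $[0,T]$. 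For the pointwise lower bound on $v$, I would use the representation $v(x,t)=\nu\int_\Omega G_\mu(x,y)u(y,t)\,dy$, where $G_\mu$ is the Neumann Green's function of $-\Delta+\mu$ on $\Omega$; since the Neumann heat semigroup on a bounded domain is positivity-improving, $G_\mu\ge c_\mu>0$ uniformly, giving $\inf_{x\in\Omega} v(x,t)\ge c_\mu\nu\int_\Omega u(\cdot,t)>0$ on $[0,T]$.

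For part~(2) the new task is a uniform-in-time lower bound. The upper bound on $m(t):=\int_\Omega u$ is routine: Cauchy--Schwarz gives $\int u^2\ge m^2/|\Omega|$, so the ODE above yields $m(t)\le M^*:=\max\{m(0),a_{\sup}|\Omega|/b_{\inf}\}$. For the lower bound I would adapt the logarithmic-functional argument of \cite{FuWiYo1}: differentiating $\int_\Omega \ln u$, integrating by parts (legitimate since $u>0$ on $(0,T_{\max}(u_0))$), applying Young's inequality with parameter $\epsilon$ to the cross term $\chi\int \nabla u\cdot\nabla v/(uv)$, and using the identity
\begin{equation*}
\int_\Omega\frac{|\nabla v|^2}{v^2}=\mu|\Omega|-\nu\int_\Omega\frac{u}{v}
\end{equation*}
(obtained by testing $0=\Delta v-\mu v+\nu u$ against $1/v$) together produce
\begin{equation*}
\frac{d}{dt}\int_\Omega \ln u \ge (1-\epsilon)\int_\Omega\frac{|\nabla u|^2}{u^2}+\Bigl(a_{\inf}-\frac{\mu\chi^2}{4\epsilon}\Bigr)|\Omega|+\frac{\chi^2\nu}{4\epsilon}\int_\Omega\frac{u}{v}-b_{\sup}m.
\end{equation*}
For $\chi\le 2$ the choice $\epsilon=1$ eliminates the gradient term and yields exactly the threshold $a_{\inf}>\mu\chi^2/4$; for $\chi>2$ this choice is suboptimal, and recovering the sharper threshold $a_{\inf}>\mu(\chi-1)$ requires either optimizing $\epsilon$ in tandem with the now-negative multiple of $\int|\nabla u|^2/u^2$, or switching to a weighted Lyapunov functional such as $\int u^q$ with $q$ calibrated to $\chi$.

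The main obstacle I anticipate is in the $\chi>2$ branch of part~(2): the naive $\epsilon=1$ Young absorption does not meet the stated $\mu(\chi-1)$ threshold, and extracting it will require careful bookkeeping of the residual gradient dissipation. Once the correct differential inequality is in hand, combining it with the upper bound $m\le M^*$ and Jensen's estimate $\int_\Omega\ln u\le|\Omega|\ln(m/|\Omega|)$ should rule out $m(t_n)\to 0$ along any sequence, after which the Green's function argument from part~(1) delivers $\inf_{x\in\Omega,\,t\ge 0} v>0$ and completes the proof.
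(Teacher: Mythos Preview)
Your Green's function argument for $\inf v$ (via positivity of the Neumann resolvent kernel) is correct and matches the paper's Lemma~\ref{prelim-lm1}. The difficulties lie in the mass lower bounds for $u$.

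\textbf{Part~(1).} Your argument bounds $u\le M_T$ via continuity on $[0,T]$ with $T<T_{\max}(u_0)$ and then applies Gronwall to $\frac{d}{dt}\int_\Omega u\ge(a_{\inf}-b_{\sup}M_T)\int_\Omega u$. But the proposition must also cover $T\ge T_{\max}(u_0)$ when $T_{\max}(u_0)<\infty$---this is precisely how it is invoked in the proof of Theorem~\ref{main-thm3}(1)---and there no uniform $L^\infty$ bound on $u$ is available: if $\|u(\cdot,t)\|_\infty\to\infty$ as $t\nearrow T_{\max}(u_0)$, your Gronwall constant degenerates and you obtain no positive lower bound on $[0,T_{\max}(u_0))$. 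The paper circumvents this by using the very logarithmic functional you reserve for Part~(2): testing the $u$-equation against $1/u$, applying Young, and using $\int_\Omega|\nabla v|^2/v^2\le\mu|\Omega|$ together with the uniform mass \emph{upper} bound $\int_\Omega u\le m^*$ gives $\frac{d}{dt}\int_\Omega\ln u\ge -C$ with $C$ independent of $\|u\|_\infty$. Jensen then yields $\int_\Omega u\ge|\Omega|\exp\bigl(\tfrac{1}{|\Omega|}\int_\Omega\ln u\bigr)$, which is bounded below on any finite interval.

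\textbf{Part~(2).} The logarithmic inequality only gives $\frac{d}{dt}\int_\Omega\ln u\ge(a_{\inf}-\mu\chi^2/4)|\Omega|-b_{\sup}m^*$, whose right-hand side is a fixed constant that may well be negative; hence $\int_\Omega\ln u$ can drift to $-\infty$ linearly and $\int_\Omega u$ to zero exponentially. Your proposed feedback via Jensen (``$m(t_n)\to 0$ forces $\int\ln u(t_n)\to-\infty$'') does not close, because there is no inequality bounding $\int_\Omega\ln u$ from below in terms of $m$. The paper instead tests against $u^{-p-1}$ for a carefully chosen $p>0$: splitting $\chi=(\chi-\beta)+\beta$ and combining Young's inequality with the identity obtained by testing the $v$-equation against $u^{-p}/v$ (Lemmas~\ref{mass-lem-3-2}--\ref{mass-lem-3-3}) yields, under \eqref{def-ainf-mu-chi},
\[
\frac{d}{dt}\int_\Omega u^{-p}\le -c\int_\Omega u^{-p}+C,
\]
so $\int_\Omega u^{-p}$ is uniformly bounded; the reverse H\"older inequality then gives $\int_\Omega u\ge|\Omega|^{(p+1)/p}\bigl(\int_\Omega u^{-p}\bigr)^{-1/p}>0$ for all $t$. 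The two regimes in \eqref{def-ainf-mu-chi} emerge from locating the positive root $\beta_+$ of the quadratic \eqref{quad-func-f}. Your instinct to pass to $\int_\Omega u^q$ is thus the right one, but it is needed (with $q=-p<0$) for every $\chi>0$, not only for $\chi>2$.
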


{ The main results of this paper include the $L^p(\Omega)$-boundedness for some $p>n$ and  global existence and $L^\infty(\Omega)$-boundedness of the unique solution of \eqref{main-eq} with given initial function $u_0$ satisfying
\eqref{initial-cond-eq}. The proof of $L^p(\Omega)$-boundedness heavily replies on the following proposition,

\begin{proposition}
\label{main-prop2}
Let $p \ge  3$ and $p-1 \leq k <  2p-2.$ There exist positive constants $M^*=M^*(p,k,\nu)>0$ and $M^{**}=M^{**}(p,k,\mu,\nu)^*>0$ such that for any given $u_0$ satisfying \eqref{initial-cond-eq},
\begin{equation}
\label{main-inequality}
   \int_{\Omega} \frac{|\nabla v_\sigma(x,t;u_0)|^{2p}}{v_\sigma^{k+1}(x,t;u_0)} \leq M^* \int_{\Omega} \frac{u_\sigma^p(x,t;u_0)}{v_\sigma^{k-p+1}(x,t;u_0)}+M^{**}\int_{\Omega} v_\sigma^{2p-k-1}(x,t;u_0)
\end{equation}
for all  $t \in (0, T_{\rm max}^\sigma(u_0))$.
\end{proposition}

Let
\begin{equation}
\label{C-star-eq-00}
C^*=C^*(p,\nu):=M^*(p+1,p,\nu),\quad C^{**}=C^{**}(p,\mu,\nu):=M^{**}(p+1,p,\mu,\nu)
\end{equation}
and  $\delta^*=\delta^*(\mu,n)$ be such that for any $u_0$ satisfying (1.4) and $\sigma\ge 0$,
\begin{equation*}
v_\sigma(x,t;u_0)\ge \delta^*\nu\int_\Omega u_\sigma(x,t;u_0)dx\quad \forall\, x\in\Omega,\,\, t\in [0,T_{\max}^\sigma(u_0))
\end{equation*}
(the existence of such $\delta^*$ follows from
 Lemma \ref{prelim-lm1}).}  When $\sigma=0$, we may put
\begin{equation*}
     (u(x,t;u_0),v(x,t;u_0))=(u_0(x,t;u_0),v_0(x,t;u_0))\;\; {\rm and}\;\;T_{\max}(u_0)=T_{\max}^0(u_0).
\end{equation*}

The first main theorem is on the $L^p(\Omega)$-boundedness of  the unique solution of \eqref{main-eq} with given initial function $u_0$ satisfying
\eqref{initial-cond-eq}.

\begin{theorem}
\label{main-thm1} ($L^p$-boundedness) {Suppose that $u_0$ satisfies
\eqref{initial-cond-eq} {and $a(\cdot,\cdot), b(\cdot,\cdot)$ satisfy \eqref{eq-1-6}.}
\begin{itemize}
\item[(1)] For any given $\sigma>0$, $p> n$, and  $0<T<\infty$, there holds
\begin{equation}
\label{thm1-eq1}
\sup_{0\le t<\min\{T,T_{\max}^{\sigma}(u_0)\}}\|u_{\sigma}(\cdot,t;u_0)\|_{L^p}<\infty.
\end{equation}

\item[(2)] If \eqref{def-ainf-mu-chi} is valid, then for any $\sigma>0$ and  $p>n$,  \eqref{thm1-eq1}  also holds for $T=\infty$.

\item[(3)]  Consider \eqref{main-eq} with $\sigma=0$. Assume that
\begin{equation}
\label{a-assumption-eq1}
a_{\inf}> a_{\chi,\mu}+ \frac{b_{\sup}|\Omega| (p_n-1) \big(C_n^*\big)^{\frac{1}{p_n+1}}}{4b_{\inf} \delta^* \nu}\frac{\chi^2}{C_{\chi}},
\end{equation}
where $p_n=\max\{2,n\}$,  $C_n^*=C^*(p_n,\nu)$,
\begin{equation*}
   a_{\chi,\mu}:=2\big(\chi+2-2\sqrt{\chi+1}\big)\mu,
\end{equation*}
 and
\begin{equation*}
    C_\chi=\begin{cases} \chi,\quad {\rm if}\,\,\,\, 0<\chi\le 1\cr
1,\quad {\rm if}\,\,\,\, \chi>1.
\end{cases}
\end{equation*}
For any $u_0$ satisfying (1.4), if there is
$\tau\in [0,T_{\max}(u_0))$ such that
\begin{equation}
\label{u-0-assumption-eq}
\int_\Omega u^{-1}(x,\tau;u_0)\le \frac{b_{\sup}|\Omega|}{\big(a_{\inf}-a_{\chi,\mu}\big)C_{\chi}},
\end{equation}
then  there is $p>\max\{2,n\}$ such that
\begin{equation*}
    \sup_{0\le t<T_{\max}(u_0)}\|u_0(\cdot,t;u_0)\|_{L^p}<\infty.
\end{equation*}
\end{itemize}}
\end{theorem}

The second main theorem  is on the global existence and boundedness of the unique solution of \eqref{main-eq} with given initial function $u_0$ satisfying
\eqref{initial-cond-eq}.

\begin{theorem}
\label{main-thm2} (Global existence and boundedness)
{ Suppose that $u_0$ satisfies
\eqref{initial-cond-eq}  {and $a(\cdot,\cdot), b(\cdot,\cdot)$ satisfy \eqref{eq-1-6}.}
\begin{itemize}

\item[(1)] (Global existence) For any $\sigma>0$,  $(u_{\sigma}(x,t;u_0),v_{\sigma}(x,t;u_0))$ exists globally, that is, $$T_{\max}^{\sigma}(u_0)=\infty.$$

\item[(2)] (Boundedness) Suppose that \eqref{def-ainf-mu-chi} holds. Then for any $\sigma>0$,  $(u_{\sigma}(x,t;u_0)$, $v_{\sigma}(x,t;u_0))$  is bounded, i.e. there exists $C>0$ such that
\begin{equation*}
     \|u_{\sigma}(\cdot,t;u_0)\|_{L^{\infty}(\Omega)} \leq C \quad \quad \text{for all}\,\,  t>0.
\end{equation*}

\item[(3)] (Global existence and boundedness)  Consider \eqref{main-eq} with $\sigma=0$. Suppose that \eqref{a-assumption-eq1} and \eqref{u-0-assumption-eq} hold. Then $T_{\max}(u_0)=\infty$ and there exists $C>0$ such that
\begin{equation*}
    \|u(\cdot,t;u_0)\|_{L^{\infty}(\Omega)} \leq C \quad \quad \text{for all}\,\,  t>0.
\end{equation*}
\end{itemize}}
\end{theorem}

\begin{remark}
\begin{itemize}
\item[(1)] Theorem \ref{main-thm2}(1) implies that, in any space dimensional setting, { strong logistic source} is sufficient to enforce the global existence of positive classical solutions of the chemotaxis system \eqref{main-eq} with singular sensitivity  and hence prevents the occurrence of finite-time blow-up even for arbitrarily large $\chi$.
Its proof heavily relies on Theorem \ref{main-thm1}(1).

\item[(2)] Theorem \ref{main-thm1}(2) implies that, when $a(x,t)$ is large relative to the chemotaxis sensitivity coefficient $\chi$, the globally defined solution $(u_\sigma(x,t;u_0),v_\sigma(x,t;u_0))$ of \eqref{main-eq} with $\sigma>0$ is bounded. Its proof is based on Theorem \ref{main-thm1}(2).

{

\item[(3)] The results stated in Theorems \ref{main-thm1}(1) and Theorem \ref{main-thm2}(1)   are obtained for the first time for \eqref{main-eq}.  The results stated in Theorems \ref{main-thm2}(2) and Theorem \ref{main-thm2}(2) extend \cite[Theorem 1]{Zha}.
In \cite[Theorem 1]{Zha}, it is proved that $(u_\sigma(x,t;u_0),v_\sigma(x,t;u_0))$ exists globally and stays bounded
when $a(x,t)\equiv a$ and $b(x,t)\equiv b$ are constants functions,   \eqref{def-ainf-mu-chi}  holds, and
$\sigma>1-\frac{2}{n}$.

\item[(4)]
Observe that $a_{\chi,\mu}$ in \eqref{a-assumption-eq1} satisfies
\begin{equation*}
    a_{\chi,\mu}\le \begin{cases} \frac{\mu \chi^2}{2},\quad &{\rm if}\,\,\,\,  0<\chi\le 2\cr
2\mu(\chi-1), \quad &{\rm if}\,\,\, \,  \chi>2.
\end{cases}
\end{equation*}
The condition  \eqref{a-assumption-eq1} is independent of $b(x,t)$ when $b(x,t)\equiv b$ is a constant function. It indicates that $a(x,t)$ is large relative to $\chi$, which makes $v$ does not become too small and is a natural condition. When $\chi\to 0$,  the condition  \eqref{a-assumption-eq1} becomes $a_{\inf}>0$. An explicit formula  for $C^*(p,\nu)$ in \eqref{a-assumption-eq1}  can be given by choosing $\epsilon_1=\epsilon_2=\epsilon$ an $\epsilon=\frac{4p-3}{4(2p-1)^2}$ in the proof of Proposition \ref{main-prop2}.

\item[(5)]
The condition
\eqref{u-0-assumption-eq} indicates that $u_0$ is not too small, which makes $v$ does not become too small and is also a natural condition.   When $\chi$ is small or $b(x,t)$ is big, \eqref{u-0-assumption-eq}  is a  week condition. In fact, when $\chi\to 0$ or $b_{\sup}\to\infty$, the condition
\eqref{u-0-assumption-eq} becomes $\int_\Omega u^{-1}(x,\tau;u_0)<\infty$. Theorem \ref{main-thm1}(3) and Theorem \ref{main-thm2}(3) show the following interesting phenomenon:  in the case that $\sigma=0$, finite-time blow-up does not occur  provided that
$a(x,t)$ is  large  relative  to $\chi$ and the initial function $u_0(x)$ is not too small.}

\item[(6)]  {The inequality \eqref{main-inequality} is a fundamental inequality and is established in this paper for the first time. The proof of \eqref{main-inequality} is very  nontrivial.}
\end{itemize}
\end{remark}

The rest of this paper is organized as follows. In section 2, we present some preliminary  lemmas that will be used to prove our main results. In section 3, we study the mass persistence and prove Proposition \ref{mass-persistence-prop}.  In section 4, we explore the $L^p$ boundedness of positive solutions of \eqref{main-eq}, derive the estimate \eqref{main-inequality} ,  and prove Theorem \ref{main-thm1}. We study the global existence and boundedness of positive solutions of \eqref{main-eq},  and prove the Theorem \ref{main-thm2} in  section 5.

\section{Preliminaries}

In this section, we present some lemmas to be used in later sections.

Throughout the rest of this section, we assume that $\Omega\subset\mathbb{R}^n$ is a bounded smooth domain and
{ $(u_{\sigma}(x,t),v_{\sigma}(x,t)):=(u_{\sigma}(x,t;u_0),v_{\sigma}(x,t;u_0))$} is the unique classical solution of \eqref{main-eq} on the maximal interval $(0,T^{\sigma}_{\max}):=(0,T_{\max}^\sigma(u_0))$ with the initial function $u_0$ satisfying \eqref{initial-cond-eq}. Note that
\begin{equation*}
    { u_{\sigma}(x,t), v_{\sigma}(x,t)>0}\quad \forall\, x\in\Omega,\,\, t\in (0,T_{\max}).
\end{equation*}

Assume $1 < p < \infty.$ Let $A:\mathcal{D}(A)\subset L^p(\Omega)\to L^p(\Omega)$ be defined by
$A=-\Delta +I$  with $\mathcal{D}(A)=\{u\in W^{2,p}(\Omega)\,|\, \frac{\p u}{\p n}=0$ on $\p \Omega\}$. Then $-A$ generates an analytic semigroup on $L^p(\Omega)$. We denote it by
$e^{-tA}$. Note that ${\rm Re}\sigma(A)>0$.  Let $A^\beta$ be the fractional power operator of $A$ (see \cite[Definition 1.4.1]{Hen}).
Let $X^\beta=\mathcal{D}(A^\beta)$ with graph norm $\|u\|_{X^\beta}=\|A^\beta u\|_{L^p(\Omega)}$ for $\beta\ge 0$ and $u\in X^\beta$
(see \cite[Definition 1.4.7]{Hen}).

{ We first present some well-known estimates associated with $A$ and $e^{-At}$.}

\begin{lemma}
\label{prelim-lm1}
 Let $w \in C^0(\bar \Omega)$ be nonnegative function such that $\int_{\Omega} w >0$. If $z$ is a weak solution to
\begin{equation*}
    \begin{cases}
    -\Delta z +\mu z=w, \quad &x\in \Omega \cr
    \frac{\p z}{\p n}=0, \quad \quad \quad &x\in \p \Omega,
    \end{cases}
\end{equation*}
then
\begin{equation*}
    z \ge \delta_0 \int_{\Omega} w>0 \quad {\rm in} \quad \Omega,
\end{equation*}
where
\begin{equation*}
\delta_0=C \int_0^\infty \frac{1}{(4\pi t)^{\frac{n}{2}}}e^{-\big(\omega t+\frac{C}{4t}\big)}dt < \infty,
\end{equation*}
for some positive constants $C$ and $\omega$ depending only on $\Omega$.
\end{lemma}

\begin{proof}
It follows from the arguments of  \cite[Lemma 2.1]{FuWiYo} and the
gaussian lower bound for the heat kernel of the laplacian with Neumann boundary
condition on smooth domain (see \cite[Theorem 4]{ChOuYa}).
\end{proof}

\begin{lemma}
\label{prelim-lm2}
\begin{itemize}
\item[(i)] For each $\beta\ge 0$, there is $C_\beta>0$ such that for some $\gamma>0,$
\begin{equation*}
    \|A^\beta e^{-At}\| \leq C_\beta t^{-\beta} e^{-\gamma t} \;\; for \;\; t>0.
\end{equation*}

\item[(ii)]
If $m \in \{0,1\}$ and $q\in [p,\infty]$ are such that $m-\frac{n}{q}<2\beta-\frac{n}{p}$,
then
$$
 X^\beta\hookrightarrow W^{m,q}(\Omega).
$$

\item[(iii)] If $2\beta -\frac{n}{p}>\theta\ge 0$, then
$$
X^\beta\hookrightarrow C^\theta(\Omega).
$$
\end{itemize}
\end{lemma}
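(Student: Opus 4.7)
All three statements are standard consequences of the theory of analytic semigroups generated by sectorial operators, developed in Henry's book \cite{Hen}, once we verify that $A=-\Delta+I$ with Neumann domain fits into that framework. The plan is therefore to first check the hypotheses and then invoke the appropriate theorems from \cite{Hen}.

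First I would recall that $-\Delta$ on $L^p(\Omega)$ with Neumann boundary conditions is sectorial, and its spectrum is contained in $[0,\infty)$ with $0$ being a simple eigenvalue (the constants). Shifting by the identity, $A=-\Delta+I$ is also sectorial, and its spectrum lies in $[1,\infty)$. In particular $\mathrm{Re}\,\sigma(A)\ge 1>0$, so the fractional powers $A^{\beta}$ are well-defined closed operators with dense domain $X^\beta=\mathcal D(A^\beta)$ for every $\beta\ge 0$, and the graph norm is equivalent to $\|A^\beta\cdot\|_{L^p(\Omega)}$.

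For part (i), since $\mathrm{Re}\,\sigma(A)\ge 1$, one can pick any $\gamma\in(0,1)$ and apply \cite[Theorem 1.4.3]{Hen} (or its direct consequence stated there), which gives the estimate $\|A^\beta e^{-At}\|\le M_\beta t^{-\beta}e^{-\gamma t}$ for all $t>0$; the exponential factor comes precisely from the spectral gap between $\sigma(A)$ and the imaginary axis. Part (ii) is a direct quotation of the embedding theorem for fractional power spaces \cite[Theorem 1.6.1]{Hen}: for the sectorial operator $A$ associated with a second-order uniformly elliptic operator with smooth coefficients on a smooth bounded domain, one has $X^\beta\hookrightarrow W^{m,q}(\Omega)$ under the stated index condition $m-n/q<2\beta-n/p$. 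Part (iii) is the companion embedding into Hölder spaces (also \cite[Theorem 1.6.1]{Hen} or the subsequent corollary), which follows from (ii) combined with the Sobolev embedding $W^{1,q}\hookrightarrow C^\theta(\Omega)$ for $q$ chosen appropriately large, provided $2\beta-n/p>\theta\ge 0$.

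There is no real obstacle here: the lemma is a compilation of classical facts used later to control the drift term in \eqref{main-eq} via the variation-of-constants formula. The only minor point to be careful about is making sure the Neumann realization of $-\Delta+I$ in $L^p(\Omega)$, $1<p<\infty$, is indeed sectorial on a smooth bounded domain — a fact that goes back to Stewart and is recorded, e.g., in \cite[Section 1.6]{Hen}. Once that is in place, the three statements follow verbatim from the cited theorems.
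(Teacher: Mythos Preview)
Your proposal is correct and matches the paper's own proof: both simply cite \cite[Theorem 1.4.3]{Hen} for part (i) and \cite[Theorem 1.6.1]{Hen} for parts (ii) and (iii). Your additional remarks on verifying sectoriality and the spectral gap are helpful context but do not change the approach.
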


\begin{proof}
(i) It follows from \cite[Theorem 1.4.3]{Hen}.

(ii) It follows from \cite[Theorem 1.6.1]{Hen}.

(iii) It also follows from \cite[Theorem 1.6.1]{Hen}.
\end{proof}

\begin{lemma}
\label{prelim-lm3}
 Given $1<p<\infty$, there is $K=K(p)>0$ such that
\begin{equation}
\label{aux-new-eq1}
\| e^{-tA}\nabla \cdot \phi\|_{ L^p(\Omega)}\leq  K(p)(1+t^{-\frac{1}{2}}) e^{-\gamma t} \|\phi\|_{ L^p(\Omega)}
 \end{equation}
 for some $\gamma>0$,  all $t>0$,  and $\phi \in C^1(\bar{\Omega})$ satisfying $\frac{\p \phi}{\p n}=0$  on $\partial \Omega$.
  Consequently, for all $t>0$,  the operator $e^{-tA}\nabla\cdot$ possesses  a uniquely determined
extension to an operator from $L^p(\Omega)$ into $L^p(\Omega)$, with norm controlled according to \eqref{aux-new-eq1}.
\end{lemma}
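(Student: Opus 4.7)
The plan is to argue by duality against the adjoint action of the self-adjoint semigroup $e^{-tA}$ on $L^{p'}(\Omega)$, where $p'=p/(p-1)$. For any test function $\psi\in L^{p'}(\Omega)$ with $\|\psi\|_{L^{p'}}\le 1$, I would use the self-adjointness of $A=-\Delta+I$ under Neumann conditions together with integration by parts to write
\begin{equation*}
\int_\Omega (e^{-tA}\nabla\cdot\phi)\,\psi\,dx = \int_\Omega (\nabla\cdot\phi)\,(e^{-tA}\psi)\,dx = -\int_\Omega \phi\cdot\nabla(e^{-tA}\psi)\,dx,
\end{equation*}
the boundary contribution vanishing thanks to the prescribed boundary behaviour of $\phi$. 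H\"older's inequality then reduces matters to proving a dual gradient estimate
\begin{equation*}
\|\nabla e^{-tA}\psi\|_{L^{p'}(\Omega)}\le K_0\,(1+t^{-1/2})\,e^{-\gamma t}\,\|\psi\|_{L^{p'}(\Omega)},\qquad t>0,
\end{equation*}
after which taking the supremum over such $\psi$ and invoking $(L^p)^*=L^{p'}$ yields \eqref{aux-new-eq1} for smooth $\phi$.

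The heart of the matter is the gradient estimate above, which I would prove by splitting $t\ge 1$ from $0<t\le 1$. For $t\ge 1$ the short-time singularity is irrelevant, so I fix $\beta\in(1/2,1)$ small enough to ensure $2\beta-n/p'>1-n/p'$; Lemma \ref{prelim-lm2}(ii) then supplies the embedding $X^\beta\hookrightarrow W^{1,p'}(\Omega)$ and, combined with Lemma \ref{prelim-lm2}(i),
\begin{equation*}
\|\nabla e^{-tA}\psi\|_{L^{p'}}\le C\,\|A^\beta e^{-tA}\psi\|_{L^{p'}}\le C\,C_\beta\,t^{-\beta}e^{-\gamma t}\|\psi\|_{L^{p'}},
\end{equation*}
which for $t\ge 1$ is dominated by a constant multiple of $e^{-\gamma t/2}$. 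For $0<t\le 1$ the required singular factor $t^{-1/2}$ is sharper than what the embedding argument alone produces; I would recover it through the factorisation $\nabla e^{-tA}=(\nabla A^{-1/2})\,A^{1/2}e^{-tA}$, invoking the classical boundedness of the Riesz-type operator $\nabla A^{-1/2}$ on $L^{p'}(\Omega)$ for every $1<p'<\infty$ on smooth bounded domains under Neumann conditions, and then applying Lemma \ref{prelim-lm2}(i) at $\beta=1/2$ to dominate $\|A^{1/2}e^{-tA}\psi\|_{L^{p'}}$ by $C\,t^{-1/2}e^{-\gamma t}\|\psi\|_{L^{p'}}$.

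Finally, since smooth vector fields that vanish normally on $\partial\Omega$ are dense in $L^p(\Omega)^n$ and the estimate just derived is uniform in $\|\phi\|_{L^p}$, a continuous extension produces the unique bounded operator on $L^p(\Omega)$ asserted in the lemma. The main obstacle is the $L^{p'}$-boundedness of $\nabla A^{-1/2}$: although standard, this is a genuine functional-calculus (or Riesz transform) fact that would need to be imported from the literature, or else bypassed by appealing directly to pointwise Gaussian-type estimates for $\nabla_x e^{t\Delta}$ associated with the Neumann heat kernel on $\Omega$. Either route pinpoints the short-time singularity $t^{-1/2}$ as the delicate point, whereas the long-time exponential decay $e^{-\gamma t}$ is automatic from the positivity of ${\rm Re}\,\sigma(A)$.
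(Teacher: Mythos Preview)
Your proposal is correct. The paper itself does not give an argument at all: its entire proof is the one-line citation ``It follows from \cite[Lemma~1.3]{Win0}.'' Your duality approach---pairing against $\psi\in L^{p'}$, using self-adjointness of $e^{-tA}$, integrating by parts to throw the divergence onto $e^{-tA}\psi$ as a gradient, and then proving the $L^{p'}$ gradient bound $\|\nabla e^{-tA}\psi\|_{L^{p'}}\le C(1+t^{-1/2})e^{-\gamma t}\|\psi\|_{L^{p'}}$---is in fact essentially the proof given in the cited reference, so you have reconstructed the standard argument rather than found a new one.

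Your diagnosis of the delicate point is also accurate: the short-time factor $t^{-1/2}$ cannot be reached through Lemma~\ref{prelim-lm2}(i)--(ii) alone (those give $t^{-\beta}$ with $\beta>1/2$), and one genuinely needs either the $L^{p'}$-boundedness of the Riesz-type operator $\nabla A^{-1/2}$ or, equivalently, pointwise Gaussian gradient bounds for the Neumann heat kernel. Both are standard on smooth bounded domains and are precisely what the cited lemma packages. The long-time exponential decay, as you note, is automatic from the spectral gap ${\rm Re}\,\sigma(A)>0$.
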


\begin{proof}
It follows from \cite[Lemma 1.3]{Win0}.
\end{proof}

\begin{lemma}
\label{prelim-lm7}
Let $p \in (1,n)$. Then there exists $C>0$ such that
\begin{equation*}
    \| \nabla v(\cdot,t) \|_{L^{\frac{np}{n-p}}(\Omega)} \leq C \|u(\cdot,t)\|_{L^p(\Omega)} \quad \text{\rm for all}\,\,  t\in (0,T_{\max}).
\end{equation*}
\end{lemma}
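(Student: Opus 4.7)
The plan is to combine standard elliptic $L^p$ regularity applied to the second equation of \eqref{main-eq} with the classical Sobolev embedding. Since the statement already flags these as the tools, the proof is essentially a bookkeeping exercise, but it is worth spelling out the chain of inequalities carefully so that the constants are independent of $t \in (0,T_{\max})$.

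First, I would rewrite the second equation of \eqref{main-eq} as the elliptic problem
\begin{equation*}
-\Delta v + \mu v = \nu u \quad \text{in } \Omega, \qquad \frac{\partial v}{\partial n} = 0 \quad \text{on } \partial\Omega.
\end{equation*}
Because $\mu > 0$, the operator $-\Delta + \mu I$ with Neumann boundary conditions is invertible on $L^p(\Omega)$ for every $p \in (1,\infty)$, and standard Calder\'on--Zygmund / Agmon--Douglis--Nirenberg $L^p$ regularity in the smooth bounded domain $\Omega$ yields a constant $C_1 = C_1(p,\mu,\nu,\Omega) > 0$ such that
\begin{equation*}
\|v(\cdot,t)\|_{W^{2,p}(\Omega)} \le C_1 \|u(\cdot,t)\|_{L^p(\Omega)}
\end{equation*}
for every $t \in (0,T_{\max})$. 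Importantly, this estimate is purely elliptic and time-enters only through the right-hand side $\nu u(\cdot,t)$, so the constant $C_1$ does not depend on $t$.

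Second, I would invoke the Gagliardo--Nirenberg--Sobolev embedding $W^{1,p}(\Omega) \hookrightarrow L^{np/(n-p)}(\Omega)$, which is valid precisely when $1 < p < n$, to conclude that there is $C_2 = C_2(p,n,\Omega) > 0$ such that
\begin{equation*}
\|\nabla v(\cdot,t)\|_{L^{np/(n-p)}(\Omega)} \le C_2 \|\nabla v(\cdot,t)\|_{W^{1,p}(\Omega)} \le C_2 \|v(\cdot,t)\|_{W^{2,p}(\Omega)}.
\end{equation*}
Chaining the two bounds with $C = C_1 C_2$ delivers the conclusion.

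Since every ingredient is standard, there is no real obstacle; the only point to double-check is that the Neumann elliptic regularity estimate has the clean form above without an $L^p$-norm of $v$ on the right-hand side (which is automatic here because $\mu > 0$ makes the problem coercive and eliminates any nontrivial kernel). If one wanted to be extra careful one could derive the estimate by setting $w = v$ and testing the equation against $w$ to first obtain an $L^2$ bound, then bootstrap via the resolvent $(-\Delta + \mu I)^{-1}$ on $L^p$; but for the statement as written, a direct appeal to the elliptic $L^p$ theory suffices.
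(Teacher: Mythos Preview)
Your proof is correct and follows exactly the approach the paper indicates: the paper does not write out a proof but simply states that the lemma ``follows from the Gagliardo--Nirenberg--Sobolev Embedding Theorem, standard elliptic theory, and the second equation of \eqref{main-eq},'' which is precisely the chain of inequalities you spell out.
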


\begin{proof}
It follows from  the Gagliardo-Nirenberg-Sobolev Embedding Theorem, standard elliptic theory, and the second equation of \eqref{main-eq}.
\end{proof}

{Next, we  provide some basic estimates for the solutions of \eqref{main-eq}.

\begin{lemma}
\label{prelim-lm4}
  For any given $u_0$ satisfying \eqref{initial-cond-eq},  we have
\vspace{-0.05in}\begin{equation}
\label{L-1-estimate-eq1-1}
\int_\Omega u_{\sigma}(x,t;u_0)dx\le m_0^*(u_0):= \max\Big\{\int_\Omega u_0,\Big(\frac{a_{\sup}}{b_{\inf}}\Big)^{\frac{1}{1+\sigma}}|\Omega|\Big\}\quad \forall\, 0<t<T_{\max}^{\sigma}(u_0)
\end{equation}
and
\begin{equation}
\label{L-sigma-estimate-eq1}
\int_{\tau}^{t} \int_\Omega u_{\sigma}^{2+\sigma}(x,s;u_0)dxds\le  \big (1+a_{\sup}\cdot (t-\tau)\big ) m_0^*(u_0)\quad \forall\, 0<\tau<t<T_{\max}^{\sigma}(u_0).
\end{equation}
\end{lemma}

\begin{proof}
First, by integrating the first equation in \eqref{main-eq} with respect to $x$, we get that
\begin{align*}
\frac{d}{dt}\int_\Omega u_{\sigma} (x,t;u_0)dx&=\int_{\Omega}\Delta u_{\sigma}(x,t;u_0) dx- \chi  \int_{\Omega} \nabla \cdot \Big(\frac{u_{\sigma}(x,t;u_0)}{v_{\sigma}(x,t;u_0)}\nabla v_{\sigma}(x,t;u_0) \Big)dx\\
&\quad + \int_{\Omega} \Big\{a(x,t)u_{\sigma}(x,t;u_0) -b(x,t) u^{2+\sigma}_{\sigma}(x,t;u_0)\Big\}dx\\
    &\le a_{\sup}\int_\Omega u_{\sigma}(x,t;u_0)dx-b_{\inf} \int_\Omega u_{\sigma} ^{2+\sigma}(x,t;u_0)dx\\
&\le a_{\sup}\int_\Omega u_{\sigma}(x,t;u_0)dx-\frac{b_{\inf}}{|\Omega|^{1+\sigma}}\Big(\int_\Omega u_{\sigma}(x,t;u_0)dx\Big)^{2+\sigma}\quad \forall\, t\in (0,T_{\max}^{\sigma}(u_0)).
\end{align*}
Let $y(t)=\int_{\Omega} u_{\sigma}(x,t;u_0)dx$ for  $t\in (0,T^{\sigma}_{\max})$. Then
\begin{equation*}
    y'(t) \leq   a_{\sup} y(t)- \frac{b_{\inf}}{|\Omega|^{1+\sigma}} y^{2+\sigma}(t) \quad \text{for all}\,\,  0<t<T_{\max}.
\end{equation*}
The inequality \eqref{L-1-estimate-eq1-1} then follows from
 the comparison principle for scalar ODEs  and continuity of
$\int_\Omega u_\sigma(x,t;u_0)dx$ in $t\in [0,T_{\max}^\sigma(u_0))$.

Second, observe that,  for any $0<\tau<t<T_{\max}^{\sigma}(u_0)$, we have
\begin{align*}
& \int_\tau ^{t} \int_\Omega b(x,t)u_{\sigma}^{2+\sigma}(x,s;u_0)dxds\\
&=\int_\Omega u_{\sigma}(x,\tau;u_0)dx-\int_\Omega u_{\sigma}(x,t;u_0)dx+\int_\tau ^{t}\int_\Omega a(x,s)u_{\sigma}(x,s;u_0)dxds.
\end{align*}
This together with \eqref{L-1-estimate-eq1-1}  implies that  \eqref{L-sigma-estimate-eq1} holds.
\end{proof}}

\begin{lemma}
\label{prelim-lm5}
For every $t\in (0,T_{\max}^\sigma)$,
\begin{equation*}
    \int_{\Omega}\frac{|\nabla v_\sigma|^2}{v_\sigma^2} \leq \mu |\Omega|.
\end{equation*}
\end{lemma}

\begin{proof}
By multiplying the second equation in \eqref{main-eq} by $\frac{1}{v_\sigma}$, we obtain that
\begin{align*}
    0&= \int_{\Omega} \frac{1}{v_\sigma} \cdot \Big( \Delta v_\sigma -\mu v_\sigma + \nu u_\sigma\Big)\\
    &= \int_{\Omega}\frac{|\nabla v_\sigma|^2}{v_\sigma^2} - \mu |\Omega| + \nu \int_{\Omega}\frac{u_\sigma}{v_\sigma}
\end{align*}
for  all $t \in (0,T_{\max}^\sigma)$. This implies that
\begin{equation*}
    \int_{\Omega}\frac{|\nabla v_\sigma|^2}{v_\sigma^2} \leq \mu |\Omega|
\end{equation*}
for every $t\in (0,T_{\max}^\sigma)$.
The lemma thus follows.
\end{proof}

\begin{lemma}
\label{prelim-lm6}
For any $0<\tau<t<T_{\max}^\sigma$, we have
\begin{align}
\label{ln-u-estimate-eq2}
 \int_\Omega \ln u_{\sigma}(x,t)dx&\ge \int_\Omega \ln u_{\sigma}(x,\tau)dx+ \Big(- \frac{\mu |\Omega|\chi^2}{4} +a_{\inf}|\Omega|-
\frac{b_{\sup}}{2+\sigma}|\Omega|\Big)(t-\tau)\nonumber\\
&\quad -\frac{b_{\sup}(1+\sigma)}{2+\sigma}\Big(1+a_{\sup}(t-\tau)\Big)m_0^*(u_0).
\end{align}
\end{lemma}

\begin{proof}
It follows from a little modified arguments of \cite[Lemma 3.2]{FuWiYo1}.  For the completeness, we provide a proof in the following.

{ Multiplying the first equation in \eqref{main-eq} by $\frac{1}{u_{\sigma}}$ and then  integrating over $\Omega$, we obtain
\begin{equation*}
    \frac{d}{dt} \int_{\Omega} \ln{u_\sigma}\ge \int_{\Omega} \frac{|\nabla u_\sigma|^2}{u_\sigma^2} - { \chi \int_{\Omega} \frac{\nabla u_\sigma}{u_\sigma}\cdot \frac{\nabla v_\sigma}{v_\sigma}}  +a_{\inf}|\Omega|-b_{\sup}\int_{\Omega}u_\sigma^{1+\sigma} \quad \text{for all}\,\,  t\in (0,T_{\max}^\sigma(u_0)).
\end{equation*}
By Young's inequality and Lemma \ref{prelim-lm5}, we have
\begin{equation*}
\chi \int_{\Omega} \frac{\nabla u_\sigma}{u_\sigma}\cdot \frac{\nabla v_\sigma}{v_\sigma} \leq \int_{\Omega} \frac{|\nabla u_\sigma|^2}{u_\sigma^2}+ \frac{\chi^2}{4} \int_{\Omega}\frac{|\nabla v_\sigma|^2}{v_\sigma^2} \leq \int_{\Omega} \frac{|\nabla u_\sigma|^2}{u_\sigma^2}+ \frac{\mu |\Omega|\chi^2}{4}.
\end{equation*}
Hence
\begin{equation*}
    \frac{d}{dt} \int_{\Omega} \ln{u_\sigma}\ge - \frac{\mu |\Omega|\chi^2}{4} +a_{\inf}|\Omega|-
b_{\sup}\int_\Omega u_{\sigma}^{1+\sigma}dx\quad \forall\,   t\in (0,T_{\max}).
\end{equation*}
By Young's inequality again,
\begin{equation*}
    \int_\Omega u_{\sigma}^{1+\sigma}\le \frac{1+\sigma}{2+\sigma}\int u_{\sigma}^{2+\sigma}+\frac{1}{2+\sigma}|\Omega|.
\end{equation*}
Hence
\begin{align*}
    \frac{d}{dt} \int_{\Omega} \ln{u_\sigma} &\ge - \frac{\mu |\Omega|\chi^2}{4} +a_{\inf}|\Omega|-
b_{\sup}\frac{1+\sigma}{2+\sigma}\int_\Omega u_{\sigma}^{2+\sigma} -\frac{b_{\sup}}{2+\sigma}|\Omega|.
\end{align*}
This together with  \eqref{L-sigma-estimate-eq1} implies   \eqref{ln-u-estimate-eq2}.}
\end{proof}

{
\begin{lemma}
    \label{prelim-lm8}
For any $\varepsilon>0$, there is $C(\varepsilon,p)>0$ such that
\begin{equation*}
\int_\Omega v_\sigma^{p+1}\le \varepsilon \int_\Omega u_\sigma^{p+1}+ C(\varepsilon,p)\Big(\int_\Omega u_\sigma\Big)^{p+1}\quad \text{for all} \;\; t\in (0,T_{\max}^\sigma).
\end{equation*}
\end{lemma}

\begin{proof}
First, multiplying the second equation in \eqref{main-eq} by $v_\sigma^{p}$ and integrating over $\Omega$, along with applying Young's inequality, we have
\begin{equation*}
    p\int_{\Omega} v_\sigma^{p-1}|\nabla v_\sigma|^2 + \mu \int_{\Omega}v_\sigma^{p+1} = \nu \int_{\Omega}u_\sigma v_\sigma^{p} \leq  \left(\frac{\nu}{p+1}\right)^{p+1}\left(\frac{p}{\mu}\right)^p\int_{\Omega} u_\sigma^{p+1} +\mu \int_{\Omega} v_\sigma^{p+1},
\end{equation*}
which implies
\begin{equation}
\label{aux-new-eq1*}
    \int_{\Omega} v_\sigma^{p-1}|\nabla v_\sigma|^2  \leq \frac{1}{p} \left(\frac{\nu}{p+1}\right)^{p+1}\left(\frac{p}{\mu}\right)^p\int_{\Omega} u_\sigma^{p+1} \quad\text{for all}\;\; t \in (0,T_{\max}^\sigma).
\end{equation}
Then, for any  $\tilde \varepsilon>0,$ there are $ C_1,\tilde C_1>0$ such that
 \begin{align*}
  &  \int_{\Omega} v_\sigma^{p+1}=\int_{\Omega} { (v_\sigma^{\frac{p+1}{2}})^2}\\
&\le \tilde\varepsilon \int_{\Omega} |\nabla v_\sigma^{\frac{p+1}{2}}|^2+C _1 \left(\int_{\Omega}  v_\sigma^{\frac{p+1}{2}} \right)^{2}\quad \quad \quad \quad \quad \quad\quad \quad \quad \quad\qquad\qquad\quad\,\,\, \, \;\;\text{(by Ehrling's  lemma)}\nonumber\\
   &\le \frac{\tilde\varepsilon (p+1)^2}{4}\int_{\Omega} v_\sigma^{p-1}|\nabla v_\sigma|^2 +
  C_1\Big( \int_\Omega v_\sigma^{p}\Big)\Big(\int_\Omega v_\sigma\Big) \quad \quad \quad \quad \quad \quad\quad \quad \quad\quad\text{(by H\"older's inequality)}\nonumber\\
&\le \frac{\tilde\varepsilon (p+1)^2}{4p}\left(\frac{\nu}{p+1}\right)^{p+1}\left(\frac{p}{\mu}\right)^p\int_{\Omega} u_\sigma^{p+1} +
  C_1\Big( \int_\Omega v_\sigma^{p}\Big)\Big(\int_\Omega v_\sigma\Big) \quad \quad \quad \quad \,\,\, \text{(by \eqref{aux-new-eq1*})}\nonumber\\
   &\le  \frac{\tilde\varepsilon (p+1)^2}{4p}\left(\frac{\nu}{p+1}\right)^{p+1}\left(\frac{p}{\mu}\right)^p\int_{\Omega} u_\sigma^{p+1} +
   \frac{1}{2} \int_\Omega v_\sigma^{p+1}+\tilde C_1\Big(\int_\Omega v_\sigma\Big)^{p+1} \quad \;\text{(by Young's inequality)}
\end{align*}
for all $t \in (0, T_{\rm max}^\sigma)$. The lemma then follows with the fact  $\int_\Omega v_\sigma=\frac{\nu}{\mu}\int_\Omega u_\sigma$ and taking $C=\tilde C_1(\frac{\nu}{\mu})^{p+1}$ and $\tilde \varepsilon =\frac{2p}{(p+1)^2}(\frac{p+1}{\nu})^{p+1}(\frac{\mu}{p})^p \varepsilon.$
\end{proof}}

\medskip

\begin{lemma} [Reverse H\"older's inequality]

Assume that $p\in(1,\infty)$. Then, for any given  Lebesgue  measurable  functions $f$ and $g$ on $\Omega$ with $g(x)\not\equiv 0$ for almost all $x\in \Omega$,
\begin{equation*}
    \|fg\|_{L^1(\Omega)} \ge \|f\|_{{L^{\frac{1}{p}}(\Omega)}}\|g\|_{{L^{\frac{-1}{p-1}}(\Omega)}}.
\end{equation*}
\end{lemma}
\begin{proof}
An application of H\"older's inequality gives
\begin{align*}
    \Big\||f|^{\frac{1}{p}}\Big\|_{L^1(\Omega)}&=\Big\||fg|^{\frac{1}{p}}|g|^{\frac{-1}{p}}\Big\|_{L^1(\Omega)}\\
    & \leq \Big\||fg|^{\frac{1}{p}}\Big\|_{L^p(\Omega)} \Big\||g|^{\frac{-1}{p}}\Big\|_{L^{\frac{p}{p-1}}(\Omega)}\\
    &=\|fg\|_{L^1(\Omega)}^{\frac{1}{p}} \Big\||g|^{\frac{-1}{p-1}}\Big\|_{L^1(\Omega)}^{\frac{p-1}{p}}.
    \end{align*}
This implies that
\begin{equation*}
    \Big\||f|^{\frac{1}{p}}\Big\|_{L^1(\Omega)}^{p} \leq \|fg\|_{L^1(\Omega)} \Big\||g|^{\frac{-1}{p-1}}\Big\|_{L^1(\Omega)}^{p-1}.
\end{equation*}
Therefore,
\begin{equation*}
    \|fg\|_{L^1(\Omega)} \ge \Big\||f|^{\frac{1}{p}}\Big\|_{L^1(\Omega)}^{p} \Big\||g|^{\frac{-1}{p-1}}\Big\|_{L^1(\Omega)}^{-(p-1)}.
\end{equation*}
The lemma follows.
\end{proof}

\section{Mass persistence}

In this section, we investigate the mass persistence of positive solutions of \eqref{main-eq} and prove the Proposition \ref{mass-persistence-prop}.

Throughout this section, we assume that $(u_\sigma(x,t),v_\sigma(x,t))=(u_\sigma(x,t;u_0),v_\sigma(x,t;u_0))$ is the unique classical  solution of \eqref{main-eq} on the maximal interval $(0,T^{\sigma}_{\max}):=(0,T^{\sigma}_{\max}(u_0))$ satisfying
the initial condition $u_\sigma(x,0;u_0)=u_0(x)$.  We may drop $(x,t)$ in $(u_\sigma(x,t),v_\sigma(x,t))$ if no confusion occurs.

We  first present some lemmas.

\begin{lemma}
\label{mass-lem-3-1}
Let $R>0$ be such that
\begin{equation}
    \label{def-R-mu-chi}
    R>
\begin{cases}
\frac{\mu \chi^2}{4}, &\text{if\,\, $0< \chi \leq 2$}\\
\mu(\chi-1), &\text{if \,$\chi>2$.}\\
\end{cases}
\end{equation}
Then there is $\beta>0$, $\beta\not =\chi$ such that
\begin{equation}
    \label{def-R}
    \frac{(p+1)\beta \mu}{p}-R<0,
\end{equation}
where $p$ is given by
\begin{equation}
    \label{def-p}
    p=\frac{4\beta}{( \chi-\beta)^2}.
\end{equation}
\end{lemma}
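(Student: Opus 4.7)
The plan is to reduce the two-parameter condition in \eqref{def-R}--\eqref{def-p} to a one-dimensional minimization problem. Using $p = 4\beta/(\chi-\beta)^2$ one has $1/p = (\chi-\beta)^2/(4\beta)$, and hence
$$\frac{(p+1)\beta\mu}{p} = \mu\beta\Bigl(1+\frac{1}{p}\Bigr) = \mu\beta + \frac{\mu(\chi-\beta)^2}{4}.$$
Thus \eqref{def-R} is equivalent to $f(\beta) < R$, where
$$f(\beta) := \mu\beta + \frac{\mu(\chi-\beta)^2}{4}, \qquad \beta > 0,\ \beta \neq \chi.$$
The task therefore reduces to showing that $\inf\{f(\beta):\beta>0,\ \beta\neq\chi\}$ lies strictly below the right-hand side of \eqref{def-R-mu-chi}, and then producing an admissible $\beta$ with $f(\beta) < R$.

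Next I would minimize $f$ by elementary calculus. A direct computation gives $f'(\beta) = \mu - \frac{\mu}{2}(\chi - \beta)$, so the unique critical point is $\beta^{\ast} = \chi - 2$. If $0 < \chi \leq 2$, then $\beta^{\ast} \leq 0$, so $f'>0$ on $(0,\infty)$ and consequently $\inf_{\beta>0}f(\beta) = \lim_{\beta\to 0^{+}}f(\beta) = \mu\chi^2/4$; the hypothesis $R > \mu\chi^2/4$ then allows us to pick any $\beta>0$ small enough (in particular with $\beta\neq\chi$) to guarantee $f(\beta)<R$. If $\chi>2$, then $\beta^{\ast} = \chi - 2 > 0$ is an interior minimizer with
$$f(\beta^{\ast}) = \mu(\chi-2) + \frac{\mu\cdot 4}{4} = \mu(\chi-1),$$
and since $\chi - 2 \neq \chi$, the choice $\beta = \chi - 2$ is admissible and yields $f(\beta) = \mu(\chi-1) < R$ by hypothesis.

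The lemma presents essentially no serious obstacle; the only thing to watch is the constraint $\beta\neq\chi$, needed so that $p$ in \eqref{def-p} is well defined. This constraint is automatic in the regime $\chi>2$ (where the minimizer is $\chi-2$) and is trivial to arrange in the regime $0<\chi\leq 2$ by choosing $\beta$ sufficiently close to $0$. In both regimes the resulting $p = 4\beta/(\chi-\beta)^2$ is strictly positive, which completes the proof.
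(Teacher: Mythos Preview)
Your proof is correct and follows essentially the same approach as the paper: both substitute \eqref{def-p} into \eqref{def-R} to obtain a quadratic condition in $\beta$, and the paper's root-finding argument for $f(\beta)=\mu\beta^2+2\mu(2-\chi)\beta+\mu\chi^2-4R$ is equivalent to your vertex computation for $\mu\beta+\tfrac{\mu}{4}(\chi-\beta)^2$. Your phrasing via minimization is arguably cleaner, since it makes transparent why the thresholds $\mu\chi^2/4$ and $\mu(\chi-1)$ arise.
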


\begin{proof}
First, by \eqref{def-p}, \eqref{def-R} is equivalent to
\begin{equation*}
    \frac{\left(\frac{4\beta}{( \chi-\beta)^2}+1\right)\beta\mu}{\frac{4\beta}{( \chi-\beta)^2}}-R<0,
\end{equation*}
which is equivalent to
\begin{equation*}
    \mu \beta^2+2\mu(2-\chi)\beta+\mu \chi^2-4R<0.
\end{equation*}
Let
\begin{equation}
\label{quad-func-f}
    f(\beta)=\mu \beta^2+2\mu(2-\chi)\beta+\mu \chi^2-4R.
\end{equation}
Then $f(\beta)=0$ if and only if $\beta=\beta_-$ or $\beta=\beta_+$, where
\begin{equation}
\label{root-a-+}
    \beta_\pm=\chi-2\pm 2\sqrt{\frac{R}{\mu}+1-\chi}.
\end{equation}

Next,  if $\chi>2$, then  $R>\mu(\chi-1)$. This implies that  $\beta_\pm$ are real numbers, $f(\beta)<0$ for $\beta\in (\beta_-,\beta_+)$, and
\begin{equation*}
    \beta_+=\chi-2+2\sqrt{\frac{R}{\mu}+1-\chi}>0.
\end{equation*}
Therefore, there is $\beta\in (0,\beta_+)$ with $\beta\not =\chi$ such that \eqref{def-R} holds.

Now, if $\chi\le 2$, then  $R>\frac{\mu \chi^2}{4}$. This
implies that
$$
{\frac{R}{\mu}+1-\chi}>\frac{\mu \chi^2}{4}\frac{1}{\mu}+1-\chi=\frac{(\chi-2)^2}{4}\ge 0,
$$
and
\begin{equation*}
    \beta_+=\chi-2+2\sqrt{\frac{R}{\mu}+1-\chi}>\chi-2+2\sqrt{\frac{(2-\chi)^2}{4}}=0.
\end{equation*}
Therefore, there is also $\beta\in (0,\beta_+)$ with $\beta\not =\chi$ such that \eqref{def-R} holds.
\end{proof}

\begin{lemma}
\label{mass-lem-3-2}
Let $p>0$. Then,
\begin{equation}
\label{eq-3-6}
p \int_{\Omega} \frac{u_\sigma^{-p-1}}{v_\sigma} \nabla u_\sigma \cdot \nabla v_\sigma \leq \mu \int_{\Omega} u_\sigma^{-p} - \int_{\Omega}u_\sigma^{-p} \frac{|\nabla v_\sigma|^2}{v_\sigma^2} \quad \text{for all}\,\, t\in (0,T_{\max}).
\end{equation}
\end{lemma}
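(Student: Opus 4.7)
The strategy is to test the elliptic equation $0=\Delta v-\mu v+\nu u$ against the weight $u^{-p}/v$, integrate over $\Omega$, and then integrate by parts using the Neumann boundary condition $\partial v/\partial n=0$. Since $u,v>0$ on $\Omega\times(0,T_{\max})$ (by positivity of the classical solution on the maximal existence interval) and $v$ is bounded away from zero on any compact subinterval by Lemma \ref{prelim-lm1}, all the integrals below are well-defined for $t\in(0,T_{\max})$, and the computations are legal.

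First I would write
\begin{equation*}
0=\int_\Omega \frac{u^{-p}}{v}\bigl(\Delta v-\mu v+\nu u\bigr)\,dx
=\int_\Omega \frac{u^{-p}}{v}\Delta v\,dx-\mu\int_\Omega u^{-p}\,dx+\nu\int_\Omega \frac{u^{-p+1}}{v}\,dx.
\end{equation*}
Next I would integrate the Laplacian term by parts. Since $\partial v/\partial n=0$ on $\partial\Omega$, there is no boundary contribution, and
\begin{equation*}
\int_\Omega \frac{u^{-p}}{v}\Delta v\,dx
=-\int_\Omega \nabla\!\left(\frac{u^{-p}}{v}\right)\!\cdot\nabla v\,dx
=p\int_\Omega \frac{u^{-p-1}}{v}\nabla u\cdot\nabla v\,dx+\int_\Omega u^{-p}\frac{|\nabla v|^2}{v^2}\,dx,
\end{equation*}
after applying the product/quotient rule $\nabla(u^{-p}/v)=-pu^{-p-1}\nabla u/v-u^{-p}\nabla v/v^2$.

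Combining the two displays yields
\begin{equation*}
p\int_\Omega \frac{u^{-p-1}}{v}\nabla u\cdot\nabla v\,dx+\int_\Omega u^{-p}\frac{|\nabla v|^2}{v^2}\,dx
=\mu\int_\Omega u^{-p}\,dx-\nu\int_\Omega \frac{u^{-p+1}}{v}\,dx,
\end{equation*}
and the last term on the right is nonnegative since $u,v,\nu>0$. Dropping that term gives exactly \eqref{eq-3-6}. There is no real obstacle here; the only point requiring a quick justification is that $u^{-p-1}\nabla u\cdot\nabla v/v$ and $u^{-p}|\nabla v|^2/v^2$ are integrable, which follows from the $C^{2,1}$ regularity of $u$ asserted in Proposition \ref{main-prop}, the $C^{2,0}$ regularity of $v$, and the strict positivity of $u$ and $v$ on $\bar\Omega\times[t_0,t]$ for every $0<t_0<t<T_{\max}$ (the estimate is pointwise in $t$, so no uniformity down to $t=0$ is needed).
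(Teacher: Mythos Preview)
Your proof is correct and follows essentially the same approach as the paper: multiply the second equation of \eqref{main-eq} by $u^{-p}/v$, integrate by parts using the Neumann boundary condition on $v$, rearrange, and discard the nonnegative term $\nu\int_\Omega u^{-p+1}/v$. The paper's proof is line-for-line the same computation, without the extra regularity remarks you added.
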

\begin{proof}
In this proof,  we drop $\sigma$ in $(u_\sigma,v_\sigma)$ and $T_{\max}^\sigma$  if no confusion occurs.

Multiplying the second equation in \eqref{main-eq} by $\frac{u^{-p}}{v}$ and then integrating over $\Omega$ with respect to $x$, we obtain that
\begin{align*}
    0&= \int_{\Omega} \frac{u^{-p}}{v} \cdot \big( \Delta v - \mu v+ \nu u\big)\\
    &= - \int_{\Omega}\frac{(-p) u^{-p-1}v \nabla u -u^{-p} \nabla v}{v^2} \cdot \nabla v - \mu \int_{\Omega} u^{-p} + \nu   \int_{\Omega}\frac{u^{-p+1}}{v} \quad \text{for all}\,\, t\in (0,T_{\max}).
\end{align*}
Thus we have,
\begin{equation*}
     p \int_{\Omega} \frac{u^{-p-1}}{v} \nabla u \cdot \nabla v + \int_{\Omega} u^{-p} \frac{|\nabla v|^2}{v^2} + \nu  \int_{\Omega}\frac{u^{-p+1}}{v}= \mu \int_{\Omega} u^{-p} \quad \text{for all}\,\, t\in (0,T_{\max}).
     \vspace{0.1in}
\end{equation*}
This together with the nonnegativity of  $\int_{\Omega}\frac{u^{-p+1}}{v}$ implies \eqref{eq-3-6}.
\end{proof}

\begin{lemma}
\label{mass-lem-3-3}
Let $p>0$. Then for every $\beta>0$, we have
\begin{align}
\label{neg-nabla-u-v}
     (p+1)\chi \int_{\Omega}  \frac{u_\sigma^{-p-1}}{v_\sigma} \nabla u_\sigma \cdot \nabla v_\sigma &\leq (p+1) \int_{\Omega} u_\sigma^{-p-2}|\nabla u_\sigma|^2 + \frac{(p+1)\beta\mu}{p} \int_{\Omega} u_\sigma^{-p} \nonumber\\
     & \quad + \left[\frac{ (p+1)(\chi-\beta)^2 }{4}-\frac{(p+1)\beta}{p} \right] \int_{\Omega} u_\sigma^{-p} \frac{|\nabla v_\sigma|^2}{v_\sigma^2}
\end{align}
for all $t\in (0,T_{\max}^\sigma)$.
\end{lemma}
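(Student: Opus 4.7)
The plan is to split the chemotactic coefficient $\chi$ as $\chi=\beta+(\chi-\beta)$ and then bound the two resulting integrals by completely different devices: the $\beta$-piece will be absorbed using the elliptic identity furnished by Lemma \ref{mass-lem-3-2}, while the $(\chi-\beta)$-piece will be dispatched by a weighted Young inequality. This is why the splitting parameter $\beta$ appears in the statement as a free parameter.

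First, I would write
\begin{equation*}
(p+1)\chi \int_{\Omega}\frac{u^{-p-1}}{v}\nabla u\cdot\nabla v = (p+1)\beta\int_{\Omega}\frac{u^{-p-1}}{v}\nabla u\cdot\nabla v+(p+1)(\chi-\beta)\int_{\Omega}\frac{u^{-p-1}}{v}\nabla u\cdot\nabla v.
\end{equation*}
For the first integral, Lemma \ref{mass-lem-3-2} gives $p\int_{\Omega}\frac{u^{-p-1}}{v}\nabla u\cdot\nabla v\leq \mu\int_{\Omega}u^{-p}-\int_{\Omega}u^{-p}\frac{|\nabla v|^2}{v^2}$, so multiplying through by $(p+1)\beta/p$ produces exactly the $\frac{(p+1)\beta\mu}{p}\int_\Omega u^{-p}$ term plus the negative contribution $-\frac{(p+1)\beta}{p}\int_\Omega u^{-p}\frac{|\nabla v|^2}{v^2}$ appearing in the bracket on the right-hand side.

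For the second integral, I would factor the integrand as
\begin{equation*}
\frac{u^{-p-1}}{v}\nabla u\cdot\nabla v=\bigl(u^{-p/2-1}\nabla u\bigr)\cdot\bigl(u^{-p/2}\tfrac{\nabla v}{v}\bigr),
\end{equation*}
and apply the elementary inequality $2|a\cdot b|\leq \frac{2}{\chi-\beta}|a|^2\cdot(\chi-\beta)\cdot 1+\ldots$, i.e.\ the weighted AM--GM estimate
\begin{equation*}
(\chi-\beta)|a||b|\leq |a|^2+\tfrac{(\chi-\beta)^2}{4}|b|^2,
\end{equation*}
with the above choice of $a,b$. This yields pointwise
\begin{equation*}
(\chi-\beta)\frac{u^{-p-1}}{v}\nabla u\cdot\nabla v\leq u^{-p-2}|\nabla u|^2+\frac{(\chi-\beta)^2}{4}u^{-p}\frac{|\nabla v|^2}{v^2},
\end{equation*}
which after integrating and multiplying by $(p+1)$ produces the remaining two terms in the asserted bound.

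Adding the two estimates gives \eqref{neg-nabla-u-v} exactly as stated. There is no real obstacle here: the content of the lemma is really the bookkeeping of how much of the gradient cross term can be offloaded onto the (useful, positive) $u^{-p-2}|\nabla u|^2$ term via Cauchy--Schwarz versus onto the (more dangerous) $u^{-p}|\nabla v|^2/v^2$ term via the elliptic identity. The only thing one has to be careful about is to keep $\beta\neq\chi$ (as ensured by Lemma \ref{mass-lem-3-1}) so that the Young step is a genuine inequality with a nontrivial coefficient; the parameter $\beta$ is then chosen downstream to make the coefficient $\frac{(p+1)(\chi-\beta)^2}{4}-\frac{(p+1)\beta}{p}$ in the bracket nonpositive, which is precisely the purpose of the constraint \eqref{def-p}.
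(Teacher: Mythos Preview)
Your proof is correct and follows exactly the same approach as the paper: split $\chi=(\chi-\beta)+\beta$, bound the $(\chi-\beta)$-piece by Young's inequality and the $\beta$-piece by Lemma~\ref{mass-lem-3-2}, then add. The only (harmless) inaccuracy is your closing remark that one must keep $\beta\neq\chi$ for the Young step---the inequality \eqref{neg-nabla-u-v} holds for every $\beta>0$ as stated, and the restriction $\beta\neq\chi$ is relevant only downstream when defining $p$ via \eqref{def-p}.
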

\begin{proof}
In this proof, we also drop $\sigma$ in $(u_\sigma,v_\sigma)$ and $T_{\max}^\sigma$ if no confusion occurs.

Note that, for every $\beta>0$,
\begin{equation*}
    \chi \int_{\Omega}  \frac{u^{-p-1}}{v} \nabla u \cdot \nabla v = \underbrace{(\chi-\beta) \int_{\Omega}  \frac{u^{-p-1}}{v} \nabla u \cdot \nabla v}_{I_1} + \underbrace{\beta \int_{\Omega}  \frac{u^{-p-1}}{v} \nabla u \cdot \nabla v}_{I_2}.
\end{equation*}
By Young's inequality, we have
\begin{equation}
    \label{eq-I-1}
    I_1=(\chi-\beta) \int_{\Omega}  \frac{u^{-p-1}}{v} \nabla u \cdot \nabla v \leq \int_{\Omega} u^{-p-2} |\nabla u|^2 + \frac{ (\chi-\beta)^2 }{4} \int_{\Omega} u^{-p} \frac{|\nabla v|^2}{v^2}.
\end{equation}
By Lemma \ref{mass-lem-3-2}, we get
\begin{equation}
    \label{eq-I-2}
    I_2=\beta \int_{\Omega}  \frac{u^{-p-1}}{v} \nabla u \cdot \nabla v \leq \frac{\beta\mu}{p} \int_{\Omega} u^{-p} - \frac{\beta}{p} \int_{\Omega} u^{-p} \frac{|\nabla v|^2}{v^2}.
\end{equation}
Therefore \eqref{eq-I-1} and \eqref{eq-I-2} entail the desired inequality \eqref{neg-nabla-u-v}.
\end{proof}

\begin{lemma}
\label{mass-lem-3-4}
Assume that \eqref{def-ainf-mu-chi} holds.  There is $p>0$ such that for every $\tau>0$,  there exists $C>0$ such that
\begin{equation*}
    \int_{\Omega} u_\sigma^{-p} \leq C \quad \text{for all}\,\, \tau<t< T_{\max}^\sigma.
\end{equation*}
\end{lemma}

\begin{proof}
It can be proved by the similar arguments as those in Lemma 6.2 in \cite{FuWiYo1}. For the self-completeness, we provide
a proof in the following.

{First of all,  fix $\chi,\mu>0$ and assume \eqref{def-ainf-mu-chi}.
Then there is  $\alpha>1$  such that
\begin{equation*}
    a_{\inf}> a_{\chi,\mu,\alpha}:=\begin{cases}
\frac{\alpha \mu \chi^2}{4}, &\text{if\,\, $0< \chi \leq 2,$}\\
\alpha \mu(\chi-1), &\text{if \,$\chi>2$.}
\end{cases}
\end{equation*}
By the arguments of Lemma \ref{mass-lem-3-1}, there is  $\beta_\alpha=\beta_\alpha(\chi,\alpha)>0$, $\beta_\alpha \not =\chi$, such that
\begin{equation*}
    \frac{(p_\alpha+1)\beta_\alpha  \mu}{p_\alpha }-a_{\chi,\mu,\alpha}=0,
\end{equation*}
where
\begin{equation*}
    p_\alpha =p_\alpha (\chi,\alpha ):=\frac{4\beta_\alpha }{( \chi-\beta_\alpha )^2}.
\end{equation*}

Next, multiplying the first equation in \eqref{main-eq} by $u^{-p_\alpha-1}$ and integrating over $\Omega$, we have that
\begin{align*}
 &   \frac{1}{p_\alpha} \cdot \frac{d}{dt} \int_{\Omega} u_{\sigma}^{-p_\alpha}\\
 &=-\int_{\Omega} u^{-p_\alpha-1}\Delta u_{\sigma}+\chi \int_{\Omega} u_{\sigma}^{-p_\alpha-1} \nabla \cdot \Big(\frac{u_{\sigma}}{v_{\sigma}} \nabla v_{\sigma}\Big)-  \int_{\Omega} a(\cdot,t) u_{\sigma}^{-p_\alpha} +  \int_{\Omega} b(\cdot,t)  u_{\sigma}^{-p_\alpha+1+\sigma}\\
    &\leq -(p_\alpha+1)\int_{\Omega} u_{\sigma}^{-p_\alpha-2}|\nabla u_{\sigma}|^2 + (p_\alpha+1) \chi \int_{\Omega} \frac{u_{\sigma}^{-p_\alpha-1}}{v_{\sigma}}\nabla u_{\sigma} \cdot \nabla v_{\sigma}\\
    &\quad -  a_{\inf} \int_{\Omega} u_{\sigma}^{-p_\alpha} +  b_{\sup} \int_{\Omega} u_{\sigma}^{-p_\alpha+1+\sigma} \quad \text{for all}\;\; t\in (0,T_{\max}^{\sigma}(u_0)).
\end{align*}
By Lemma \ref{mass-lem-3-3},  we have that
\begin{align*}
    \frac{1}{p_\alpha} \cdot \frac{d}{dt} \int_{\Omega} u_{\sigma}^{-p_\alpha} &\leq (p_\alpha+1) \left[\frac{ (\chi-\beta_\alpha)^2 }{4}-\frac{\beta_\alpha}{p_\alpha} \right] \int_{\Omega} u_{\sigma}^{-p_\alpha} \frac{|\nabla v|^2}{v^2} \\
     &\quad + \left[\frac{(p_\alpha+1)\beta_\alpha\mu}{p_\alpha}-a_{\inf}\right] \int_{\Omega} u_{\sigma}^{-p_\alpha}+ b_{\sup} \int_{\Omega} u_{\sigma}^{-p_\alpha+1+\sigma} \quad \text{for all}\;\; t\in (0,T_{\max}^{\sigma}(u_0)).
\end{align*}
Now, notice  that we have $\frac{ (\chi-\beta_\alpha)^2 }{4}-\frac{\beta_\alpha}{p_\alpha}=0$
and $\frac{(p_\alpha+1)\beta_\alpha \mu}{p_\alpha}-a_{\inf}<0$. It then follows that
\begin{align*}
    \frac{1}{p_\alpha} \cdot \frac{d}{dt} \int_{\Omega} u_{\sigma}^{-p_\alpha} &\leq -(a_{\inf}-a_{\chi,\mu,\alpha}) \int_{\Omega} u_{\sigma}^{-p_\alpha} + b_{\sup} \int_{\Omega} u_{\sigma}^{-p_\alpha+1+\sigma}\quad \text{for all}\;\; t\in (0,T_{\max}^{\sigma}(u_0)).
\end{align*}
By Lemma \ref{prelim-lm4}, Young's and H\"older inequalities, we obtain that
\begin{align*}
& b_{\sup} \int_{\Omega} u^{-p_\alpha +1+\sigma}\le \begin{cases}
 \frac{a_{\inf}-a_{\chi,\mu,\alpha}}{2} \int_{\Omega} u^{-p_\alpha} + b_{\sup}^{\frac{p_\alpha}{1+\sigma}}\Big(\frac{2(p_\alpha-1-\sigma)}{p_\alpha (a_{\inf}-a_{\chi,\mu,\alpha})}\Big)^{\frac{p_\alpha-1-\sigma}{1+\sigma}}\frac{1+\sigma}{p_\alpha}|\Omega|,  \quad &{\rm if}\,\,\, p_\alpha>1+\sigma,\cr
 b_{\sup}|\Omega|, \quad &{\rm if}\,\,\, p_\alpha=1+\sigma,\cr
 b_{\sup}|\Omega|^{p_\alpha -\sigma}(m_0^*(u_0))^{1+\sigma-p_\alpha },
\quad &{\rm if}\,\,\, \sigma< p_\alpha <1+\sigma,\cr
 \frac{1+\sigma-p_\alpha}{2+\sigma}\int_\Omega u_{\sigma}^{2+\sigma}+  b_{\sup}^{\frac{2+\sigma}{p_\alpha+1}} \frac{p_\alpha +1}{2+\sigma}|\Omega|,\quad & {\rm if}\,\,\, p_\alpha\le \sigma,
\end{cases}
\end{align*}
for any $t\in (0,T_{\max}^\sigma(u_0))$,
where $m_0^*(u_0)$ is as in \eqref{L-1-estimate-eq1-1}.  It then follows that
\begin{equation*}
    \frac{d}{dt} \int_{\Omega} u_{\sigma}^{-p_\alpha} \leq -\frac{p_\alpha (a_{\inf}-a_{\chi,\mu,\alpha})}{2}\int_{\Omega} u_{\sigma} ^{-p_\alpha } + p_\alpha \int_\Omega u_{\sigma}^{2+\sigma}+ p_\alpha M_1^* \quad \text{for all}\;\; t\in (0,T_{\max}^{\sigma}(u_0)),
\end{equation*}
where
\begin{align*}
M_1^*&=\begin{cases}
b_{\sup}^{\frac{p_\alpha}{1+\sigma}}\Big(\frac{2(p_\alpha-1-\sigma)}{p_\alpha (a_{\inf}-a_{\chi,\mu,\alpha})}\Big)^{\frac{p_\alpha -1-\sigma}{1+\sigma}}\frac{1+\sigma}{p_\alpha }|\Omega|,  \quad &{\rm if}\quad p_\alpha >1+\sigma,\cr
   b_{\sup}|\Omega|,\quad &{\rm if}\quad p_\alpha =1+\sigma,\cr
   b_{\sup}|\Omega|^{p_\alpha -\sigma}(m_0^*)^{1+\sigma-p_\alpha},  \quad &{\rm if}\quad \sigma<p_\alpha < 1+\sigma,\cr
b_{\sup}^{\frac{2+\sigma}{p_\alpha +1}} \frac{p_\alpha +1}{2+\sigma}|\Omega|, \quad &{\rm if}\quad p_\alpha \le \sigma.
\end{cases}
\end{align*}

Now, let
\begin{equation*}
    c^*:=p_\alpha (1+a_{\sup})m_0^*(u_0)+p_\alpha M_1^*.
\end{equation*}
By  Lemma  \ref{prelim-lm4} and \cite[Lemma 3.4]{StSuWi}, we have}
\begin{equation*}
    \int_\Omega u_\sigma ^{-p_\alpha}(x,t)dx \le M_2^*(\tau,u_0):=\max\Big\{ \int_\Omega u_\sigma^{-p_\alpha}(x,\tau)dx+c^*,\frac{2c^*}{p_\alpha (a_{\inf}-a_{\chi,\mu,\alpha})}+2 c^*\Big\},
\end{equation*}
for all $0<\tau<t<T_{\max}^\sigma(u_0)$.  The lemma thus follows.
\end{proof}

Now, we prove Proposition \ref{mass-persistence-prop}.

\begin{proof}[Proof of Proposition \ref{mass-persistence-prop}]
(1) First of all,  by Lemma \ref{prelim-lm1}, it suffices to prove that
$$
\inf_{0\le t<\min\{T,{ T^{\sigma}_{\max}}\}}\int_\Omega { u_{\sigma}(x,t)}dx>0.
$$
Fix a $\tau\in (0,{ T^{\sigma}_{\max}})$. It is clear that
$$\inf_{0\le t\le \tau}\int_\Omega { u_{\sigma}(x,t)}dt>0.
$$
It then suffices to prove that  there exist $C=C(T)>0$  such that
\begin{equation}
\label{new-new-eq1}
    \int_{\Omega} { u_{\sigma}(x,t)}dx \ge C(T) \quad \text{\rm for all}\,\, t\in (\tau,\hat T),
\end{equation}
where $\hat T=\min\{T,{ T^{\sigma}_{\max}}\}$.
Note that this inequality follows from the similar arguments as those in \cite[Corollary 3.3]{FuWiYo1}. For the reader's convenience, we provide a proof for this inequality in the following.

Note that $L:=\int_{\Omega} {\ln{ u_{\sigma}(x,\tau)}}dx$ is finite. By Lemma \ref{prelim-lm6},
\begin{align*}
 \int_\Omega \ln u_{\sigma}(x,t)dx&\ge \int_\Omega \ln u_{\sigma}(x,\tau)dx+ \Big(- \frac{\mu |\Omega|\chi^2}{4} +a_{\inf}|\Omega|-
\frac{b_{\sup}}{2+\sigma}|\Omega|\Big)(t-\tau)\nonumber\\
&\quad -\frac{b_{\sup}(1+\sigma)}{2+\sigma}\Big(1+a_{\sup}(t-\tau)\Big)m_0^*(u_0)
\end{align*}
for all $t\in (\tau,\hat T)$.
By Jensen's inequality,  {we have
\begin{equation*}
    \int_{\Omega} \ln{u_\sigma(x,t)}dx=|\Omega| \cdot \int_{\Omega} \ln{u_\sigma(x,t)}\frac{dx}{|\Omega|} \leq |\Omega| \cdot \ln{\Big( \int_{\Omega} u_\sigma(x,t) \frac{dx}{|\Omega|}  \Big)} \quad \text{for all}\,\, t\in (0, T_{\max}^\sigma(u_0)).
\end{equation*}}
It then follows that there is  $C(\tau,\hat T)>0$ such that
\begin{equation*}
    \int_{\Omega} { u_\sigma(x,t)} \ge |\Omega| \cdot \exp{\Big( \frac{1}{|\Omega|} \cdot \int_{\Omega} { \ln{u_\sigma(x,t)}} dx   \Big)} \ge |\Omega| \cdot e^{\frac{1}{|\Omega|}\cdot C(\tau,\hat T)} \quad \text{for all}\,\, t\in (\tau, \hat T).
\end{equation*}
\eqref{new-new-eq1} then follows. { Note that \eqref{ln-u-estimate-eq1-2} follows from \eqref{ln-u-estimate-eq1-1} and Lemma \ref{prelim-lm1}.}

\medskip

(2) In view of the Reverse H\"older inequality, taking $f=1$, { $g=u_\sigma$ and $p\rightarrow \frac{p_\alpha+1}{p_\alpha}>1,$ we have
\begin{equation*}
    \int_{\Omega} u_\sigma \ge |\Omega|^{\frac{p_\alpha+1}{p_\alpha}} \left(\int_{\Omega}u_\sigma^{-p_\alpha}\right)^{-\frac{1}{p_\alpha}} \quad \text{for all}\,\, t\in (\tau,T_{\max}^{\sigma}(u_0)).
\end{equation*}
This together with  Lemma \ref{prelim-lm1}, Lemma \ref{mass-lem-3-4}, and the continuity of $\int_\Omega u_\sigma(x,t)dx$ in $t\in [0,T_{\max}^\sigma(u_0))$  implies that \eqref{ln-u-estimate-eq1-1}-\eqref{ln-u-estimate-eq1-2} also hold} for $T=\infty$.
\end{proof}

\section{$L^p$-Boundedness}

In this section, we study the $L^p$-boundedness of solutions of \eqref{main-eq} and prove Proposition \ref{main-prop2} and Theorem \ref{main-thm1}.
Throughout this section, we assume that $u_0(x)$ satisfies \eqref{initial-cond-eq} and
{ $(u_\sigma(x,t),v_\sigma(x,t))=(u_\sigma(x,t;u_0),v_\sigma(x,t;u_0))$} is the unique classical  solution of \eqref{main-eq} on the maximal interval { $(0,T^{\sigma}_{\max}):=(0,T^{\sigma}_{\max}(u_0))$} with the initial function $u_0$. We may drop $\sigma$ in $(u_\sigma(x,t),v_\sigma(x,t))$ and $T_{\max}^\sigma$ if no confusion occurs.

Note that, multiplying the first equation in \eqref{main-eq} by $u^{p-1}$ and integrating  over $\Omega$ with respect to $x$, we have
\begin{align*}
    \frac{1}{p} \cdot \frac{d}{dt} \int_{\Omega} u^p &=\int_{\Omega} u^{p-1}\Delta u-\chi \int_{\Omega} u^{p-1} \nabla \cdot \Big(\frac{u}{v}\nabla v\Big)+  \int_{\Omega} a(\cdot,t) u^p -  \int_{\Omega} b(\cdot,t)  { u^{p+1+\sigma}}\\
    &=-(p-1)\int_{\Omega} u^{p-2}|\nabla u|^2 + (p-1) \chi \int_{\Omega} \frac{u^{p-1}}{v}\nabla u \cdot \nabla v +\int_{\Omega} a(\cdot,t) u^p -  \int_{\Omega} b(\cdot,t) { u^{p+1+\sigma}}
\end{align*}
for $t\in (0,T_{\max})$. To get the $L^p$-boundedness of $u$, the main ingredient is to get proper estimates for $\int_{\Omega}\frac{u^{p-1}}{v} \nabla u \cdot \nabla v$. Observe that, by Young's inequality, we have
\begin{align}
\label{aux-new-eq00}
   \chi \int_{\Omega} \frac{u^{p-1}}{v}\nabla u \cdot \nabla v
&\le \int_\Omega u^{p-2} |\nabla u|^2+\frac{\chi^2}{4} \int_\Omega u^p\frac{|\nabla v|^2}{v^2} \nonumber\\
&\le\int_\Omega u^{p-2}|\nabla u|^2+\frac{\chi^2}{4\inf_{x\in\Omega}v(x,t)}\int_\Omega u^p\frac{|\nabla v|^2}{v} \quad \qquad\qquad\qquad\qquad\qquad  \nonumber\\
&\le\int_\Omega u^{p-2}|\nabla u|^2+\frac{\chi^2}{4\inf_{x\in\Omega}v(x,t)} \Big(\int_\Omega u^{p+1}\Big)^{\frac{p}{p+1}}\Big(\int_\Omega \frac{|\nabla v|^{2p+2}}{v^{p+1}}\Big)^{\frac{1}{p+1}} \quad \quad  
\end{align}
for $t\in (0,T_{\max})$, where the last inequality follows from H\"older inequality. We will then provide some estimate for $\int_\Omega \frac{|\nabla v|^{2p}}{v^{k+1}}$ in subsection 4.1
 (see Proposition \ref{main-prop2}), which will enable us to provide some estimate for $\int_\Omega \Big(\frac{|\nabla v|}{v}\Big)^{2p+2}$
in terms of $\int_\Omega u^{p+1}$. Finally, using Proposition \ref{main-prop2}, we prove Theorem \ref{main-thm1} in subsection 4.2.

\subsection{Estimates for $\int_{\Omega} \frac{|\nabla v|^{2p}}{v^{k+1}} $ and proof of Proposition \ref{main-prop2}}

In this subsection, we provide some estimate for $\int_{\Omega} \frac{|\nabla v|^{2p}}{v^{k+1}}$ and prove Proposition \ref{main-prop2}.

We first prove some lemmas.

\begin{lemma}
\label{lem-4-5}
Let $p \ge 3$  and $k \ge 1.$  Then
\begin{equation*}
    \int_{\Omega}\frac{|\nabla v|^{2p}}{v^{k+1}} \leq \frac{(p-1)^2}{k^2} \int_{\Omega}\frac{|\nabla v|^{2p-6}}{v^{k-1}}\big |\nabla |\nabla v|^2\big|^2 + \frac{2\mu}{k} \int_{\Omega}\frac{|\nabla v|^{2p-2}}{v^{k-1}}
\end{equation*}
{for all $t \in (0, T_{\rm max})$}.
\end{lemma}

 \begin{proof}
 Multiplying the second equation in \eqref{main-eq} by $\frac{|\nabla v|^{2p-2}}{v^{k}}$ and then integrating over $\Omega$ with respect to $x$, we have that
\begin{align*}
    0&= \int_{\Omega} \frac{|\nabla v|^{2p-2}}{v^{k}} \cdot \big( \Delta v -\mu v+\nu u\big)\\
    &= - \int_{\Omega}\nabla v \cdot \Big[{(p-1)\frac{|\nabla v|^{2p-4}}{v^{k}} \nabla |\nabla v|^2 - k \frac{|\nabla v|^{2p-2}}{v^{k+1}}} \nabla v \Big]  - \mu \int_{\Omega}\frac{|\nabla v|^{2p-2}}{v^{k-1}} + \nu \int_{\Omega}\frac{u|\nabla v|^{2p-2}}{v^{k}}.
\end{align*}
Therefore
\begin{equation}
\label{eq-4-41}
   k \int_{\Omega} \frac{|\nabla v|^{2p}}{v^{k+1}} + \nu \int_{\Omega}\frac{u|\nabla v|^{2p-2}}{v^{k}} = (p-1) \int_{\Omega} \frac{|\nabla v|^{2p-4}}{v^{k}} \nabla v \cdot \nabla |\nabla v|^2 + \mu \int_{\Omega}\frac{|\nabla v|^{2p-2}}{v^{k-1}}
\end{equation}
for all $t\in (0,T_{\max})$.  By Young's inequality, we have
\begin{equation*}
    (p-1) \int_{\Omega} \frac{|\nabla v|^{2p-4}}{v^{k}} \nabla v \cdot \nabla |\nabla v|^2 \leq \frac{k}{2}\int_{\Omega} \frac{|\nabla v|^{2p}}{v^{k+1}} + \frac{(p-1)^2}{2k}  \int_{\Omega} \frac{|\nabla v|^{2p-6}}{v^{k-1}} \big|\nabla |\nabla v|^2\big|^2
\end{equation*}
for all $t\in (0,T_{\max})$.  This together with \eqref{eq-4-41} implies that
$$
\frac{k}{2}\int_{\Omega} \frac{|\nabla v|^{2p}}{v^{k+1}} \le  \frac{(p-1)^2}{2k}  \int_{\Omega} \frac{|\nabla v|^{2p-6}}{v^{k-1}} \big|\nabla |\nabla v|^2\big|^2 + \mu \int_{\Omega}\frac{|\nabla v|^{2p-2}}{v^{k-1}}.
$$
The lemma then follows by multiplying  the above inequality both sides by $\frac{2}{k}$.
\end{proof}

To prove Proposition \ref{main-prop2},  we then provide some estimate for
 $\int_{\Omega}\frac{|\nabla v|^{2p-6}}{v^{k-1}}\big |\nabla |\nabla v|^2\big|^2$. Note  that
 \begin{align*}
    \int_{\Omega} \frac{|\nabla v|^{2p-6}}{v^{k-1}}\big |\nabla |\nabla v|^2\big|^2 &= \int_{\partial \Omega} \frac{|\nabla v|^{2p-4}}{v^{k-1}} \frac{\partial |\nabla v|^2}{\partial \nu} + (k-1)\int_{\Omega} \frac{ |\nabla v|^{2p-4}}{v^{k}} \nabla v \cdot \nabla |\nabla v|^2 \nonumber\\
    & \quad -  \int_{\Omega} \frac{|\nabla v|^{2p-4}}{v^{k-1}}  \Delta |\nabla v|^2- (p-3) \int_{\Omega} \frac{|\nabla v|^{2p-6}}{v^{k-1}}\big |\nabla |\nabla v|^2\big|^2.
\end{align*}
Hence
\begin{align}
\label{eq-4-43}
  (p-2)  \int_{\Omega} \frac{|\nabla v|^{2p-6}}{v^{k-1}}\big |\nabla |\nabla v|^2\big|^2 &= \int_{\partial \Omega} \frac{|\nabla v|^{2p-4}}{v^{k-1}} \frac{\partial |\nabla v|^2}{\partial \nu} + (k-1)\int_{\Omega} \frac{ |\nabla v|^{2p-4}}{v^{k}} \nabla v \cdot \nabla |\nabla v|^2 \nonumber\\
    & \quad -  \int_{\Omega} \frac{|\nabla v|^{2p-4}}{v^{k-1}}  \Delta |\nabla v|^2.
\end{align}
Note also that
\begin{equation*}
    \Delta |\nabla v|^2=2 \nabla v \cdot \nabla(\Delta v)+{ 2|D^2 v|^2}= 2 \nabla v \cdot \nabla(\mu v-\nu u)+{ 2|D^2 v|^2}.
\end{equation*}
Hence
\begin{align}
    \label{eq-4-45}
   & -  \int_{\Omega} \frac{|\nabla v|^{2p-4}}{v^{k-1}}  \Delta |\nabla v|^2 \nonumber\\
    &=-2 \mu \int_{\Omega} \frac{|\nabla v|^{2p-2}}{v^{k-1}}+2 \nu \int_{\Omega} \frac{|\nabla v|^{2p-4}}{v^{k-1}} \nabla u \cdot \nabla v -2  \int_{\Omega} \frac{|\nabla v|^{2p-4}}{v^{k-1}} |D^2v|^2.
\end{align}
To provide some estimate for
 $\int_{\Omega}\frac{|\nabla v|^{2p-6}}{v^{k-1}}\big |\nabla |\nabla v|^2\big|^2$,   we   provide some estimates     for $\int_{\partial \Omega} \frac{|\nabla v|^{2p-4}}{v^{k-1}} \frac{\partial |\nabla v|^2}{\partial \nu}$
 and $ \int_{\Omega}\frac{|\nabla v|^{2p-4}}{v^{k-1}} \nabla u \cdot \nabla v$ in next two lemmas.

\begin{lemma}
\label{lem-4-4}
For every $\epsilon>0$, there is $C_1=C_1(p,k,\epsilon)>0$ such that
\begin{equation*}
     \int_{\partial \Omega} \frac{|\nabla v|^{2p-4}}{v^{k-1}} \frac{\partial |\nabla v|^2}{\partial \nu} \leq \epsilon \int_{ \Omega} \frac{|\nabla v|^{2p-6}}{v^{k-1}}\big |\nabla |\nabla v|^2\big|^2 + C_1 \int_{ \Omega} \frac{|\nabla v|^{2p-2}}{v^{k-1}}
\end{equation*}
\end{lemma}

\begin{proof}
First, by \cite[Lemma 4.2]{MiSo}, there is $K_1=K_1(\Omega)>0$ such that  for every $w\in C^2(\bar\Omega)$ satisfying $\frac{\partial w}{\partial \nu}=0$ on $\partial \Omega$,
\begin{equation}
\label{new-eq2}
    \frac{\partial |\nabla w|^2}{\partial \nu} \leq K_1 |\nabla w|^2 \quad \text{on} \quad \partial \Omega.
\end{equation}
By the trace theorem (see \cite[Theorem 3.37]{Mcl}), for every $\frac{1}{2}<\theta<1$, there is $K_2$ such that
\begin{equation*}
    \|w\|_{L^2(\p \Omega)}\le K_2 \|w\|_{W^{\theta,2}(\Omega)}\quad \forall\, w\in W^{\theta,2}(\Omega).
\end{equation*}
By \cite[Theorem 11.6]{Ama}, the following interpolation holds.
\begin{equation*}
    (L^2(\Omega),W^{1,2}(\Omega))_{\theta,2}=W^{\theta,2}(\Omega).
\end{equation*}
By \cite[Theorem 1.3.3]{Tri}, there is $K_3$ such that
\begin{equation*}
    \|w\|_{W^{\theta,2}(\Omega)}\le K_3 \|w\|_{L^2(\Omega)}^{1-\theta}\|w\|_{W^{1,2}(\Omega)}^\theta.
\end{equation*}
This together with Young's inequality and $\|w\|_{W^{1,2}(\Omega)}\le \|w\|_{L^2(\Omega)}+\|\nabla w\|_{L^2(\Omega)}$ implies that, for every $\epsilon>0$, there is $C(\epsilon)>0$ such that
\begin{equation}
\label{new-eq5}
    \|w\|_{L^2(\partial\Omega)} \leq \epsilon \|\nabla w\|_{L^2(\Omega)}+C(\epsilon)\|w\|_{L^2(\Omega)}, \quad \forall w \in W^{1,2}(\Omega).
\end{equation}

Next, by \eqref{new-eq2} and \eqref{new-eq5}, for every $\epsilon>0$, there is  $C(p,\epsilon,K_1)>0$ such that
\begin{align}
\label{new-eq6}
    \int_{\partial \Omega} \frac{|\nabla v|^{2p-4}}{v^{k-1}} \frac{\partial |\nabla v|^2}{\partial \nu} &\leq K_1(\Omega) \int_{\partial \Omega} \frac{|\nabla v|^{2p-2}}{v^{k-1}}\nonumber\\
    & \leq \frac{4\epsilon}{(p-1)^2} \int_{ \Omega}  \left|\nabla \left( \frac{|\nabla v|^{p-1}}{v^{\frac{k-1}{2}}}\right)\right|^2 + C(p,k,\epsilon,K_1) \int_{ \Omega} \frac{|\nabla v|^{2p-2}}{v^{k-1}}.
\end{align}
Note that
\begin{align}
\label{new-eq7}
     \int_{ \Omega}  \left|\nabla \left( \frac{|\nabla v|^{p-1}}{v^{\frac{k-1}{2}}}\right)\right|^2 &= \int_{ \Omega}  \Bigg| \frac{p-1}{2} \frac{|\nabla v|^{p-3}}{v^{\frac{k-1}{2}}} \nabla |\nabla v|^2 - \frac{k-1}{2} \frac{|\nabla v|^{p-1}}{v^{\frac{k+1}{2}}}  \nabla v \Bigg|^2\nonumber\\
     &=\frac{(p-1)^2}{4} \int_{ \Omega} \frac{|\nabla v|^{2p-6}}{v^{k-1}}\big |\nabla |\nabla v|^2\big|^2 + \frac{(k-1)^2}{4} \int_{\Omega} \frac{|\nabla v|^{2p}}{v^{k+1}}\nonumber\\
     &\quad -\frac{(p-1)(k-1)}{2} \int_{\Omega} \frac{ |\nabla v|^{2p-4}}{v^{k}} \nabla v \cdot \nabla |\nabla v|^2.
\end{align}
By \eqref{eq-4-41},
\begin{equation*}
   \frac{(k-1)^2}{4} \int_{\Omega} \frac{|\nabla v|^{2p}}{v^{k+1}} \leq  \frac{(k-1)^2(p-1)}{4k} \int_{\Omega} \frac{|\nabla v|^{2p-4}}{v^{k}} \nabla v \cdot \nabla |\nabla v|^2 + \frac{\mu (k-1)^2 }{4k} \int_{\Omega}\frac{|\nabla v|^{2p-2}}{v^{k-1}}.
\end{equation*}
This together with \eqref{new-eq7} implies that
\begin{align}
\label{new-eq8}
     \int_{ \Omega}  \left|\nabla \left( \frac{|\nabla v|^{p-1}}{v^{\frac{k-1}{2}}}\right)\right|^2 &\le  \frac{(p-1)^2}{4} \int_{ \Omega} \frac{|\nabla v|^{2p-6}}{v^{k-1}}\big |\nabla |\nabla v|^2\big|^2+ \frac{\mu (k-1)^2 }{4k} \int_{\Omega}\frac{|\nabla v|^{2p-2}}{v^{k-1}}\nonumber\\
     &\quad + \frac{(p-1)(k-1)}{2} \left(\frac{k-1}{2k}-1\right) \int_{\Omega} \frac{ |\nabla v|^{2p-4}}{v^{k}} \nabla v \cdot \nabla |\nabla v|^2\nonumber\\
     &= \frac{(p-1)^2}{4} \int_{ \Omega} \frac{|\nabla v|^{2p-6}}{v^{k-1}}\big |\nabla |\nabla v|^2\big|^2+ \frac{\mu (k-1)^2 }{4k} \int_{\Omega}\frac{|\nabla v|^{2p-2}}{v^{k-1}}\nonumber\\
     &\quad -\frac{(p-1)(k-1)(k+1)}{4k} \int_{\Omega} \frac{ |\nabla v|^{2p-4}}{v^{k}} \nabla v \cdot \nabla |\nabla v|^2.
\end{align}
By \eqref{eq-4-41} again,
\begin{equation*}
   - \int_{\Omega} \frac{|\nabla v|^{2p-4}}{v^{k}} \nabla v \cdot \nabla |\nabla v|^2 \leq  \frac{\mu }{p-1} \int_{\Omega}\frac{|\nabla v|^{2p-2}}{v^{k-1}}.
\end{equation*}
This together with \eqref{new-eq8} implies that
\begin{equation*}
    \int_{ \Omega}  \left|\nabla \left( \frac{|\nabla v|^{p-1}}{v^{\frac{k-1}{2}}}\right)\right|^2 \leq \frac{(p-1)^2}{4} \int_{ \Omega} \frac{|\nabla v|^{2p-6}}{v^{k-1}}\big |\nabla |\nabla v|^2\big|^2 + \frac{\mu(k-1)}{2} \int_{\Omega}\frac{|\nabla v|^{2p-2}}{v^{k-1}}.
\end{equation*}
This together with \eqref{new-eq6} implies that
\begin{equation*}
     \int_{\partial \Omega} \frac{|\nabla v|^{2p-4}}{v^{k-1}} \frac{\partial |\nabla v|^2}{\partial \nu} \leq \epsilon \int_{ \Omega} \frac{|\nabla v|^{2p-6}}{v^{k-1}}\big |\nabla |\nabla v|^2\big|^2 + C_1 \int_{ \Omega} \frac{|\nabla v|^{2p-2}}{v^{k-1}},
\end{equation*}
where $C_1=\frac{2\mu(k-1)}{(p-1)^2} + C(p,\epsilon,K_1).$ The lemma is thus proved.
\end{proof}

\begin{lemma}
\label{lem-4-3}
Let $p \ge  3$ and $k \ge p-1.$  For every given $\varepsilon_1>0$ and $\varepsilon_2>0$, there is $M_1=M_1(\varepsilon_1,\varepsilon_2,p,k,\nu)$ such that
\begin{equation}
    \label{eq-4-30}
    \int_{\Omega}\frac{|\nabla v|^{2p-4}}{v^{k-1}} \nabla u \cdot \nabla v \leq M_1\int_{\Omega} \frac{u^p}{v^{k-p+1}} + {\varepsilon}_1 \int_{\Omega}\frac{|\nabla v|^{2p-6}}{v^{k-1}}\big |\nabla |\nabla v|^2\big|^2 + {\varepsilon_2} \int_{\Omega}\frac{|\nabla v|^{2p}}{v^{k+1}}
    \end{equation}
for all $t \in (0, T_{\rm max})$.
\end{lemma}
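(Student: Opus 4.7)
The plan is to integrate by parts to trade $\nabla u$ for $u$, use the second equation of \eqref{main-eq} to rewrite the resulting $\Delta v$, and then apply Young's inequality three times to split the surviving terms into the three pieces appearing on the right-hand side of \eqref{eq-4-30}.

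Concretely, since $\frac{\p v}{\p n}=0$ on $\p\Omega$, integration by parts yields
\[
\int_\Omega \frac{|\nabla v|^{2p-4}}{v^{k-1}} \nabla u \cdot \nabla v = -\int_\Omega u\, \nabla \cdot \left( \frac{|\nabla v|^{2p-4}}{v^{k-1}} \nabla v \right).
\]
Expanding the divergence and substituting $\Delta v = \mu v - \nu u$ from the second equation of \eqref{main-eq} produces four terms. The term $-\mu \int_\Omega u |\nabla v|^{2p-4}/v^{k-2}$ is nonpositive and can be discarded, leaving
\[
\int_\Omega \frac{|\nabla v|^{2p-4}}{v^{k-1}} \nabla u \cdot \nabla v \leq (p-2)\int_\Omega \frac{u\,|\nabla v|^{2p-5}}{v^{k-1}} \bigl|\nabla |\nabla v|^2\bigr| + (k-1)\int_\Omega \frac{u\,|\nabla v|^{2p-2}}{v^{k}} + \nu \int_\Omega \frac{u^2|\nabla v|^{2p-4}}{v^{k-1}}.
\]

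Each of these three terms is then estimated by a Young's inequality with weights tuned to the targets. For the first, I would factor the integrand as $\bigl(|\nabla v|^{p-3}/v^{(k-1)/2}\bigr)\bigl|\nabla |\nabla v|^2\bigr|$ times $u|\nabla v|^{p-2}/v^{(k-1)/2}$ and apply Young's inequality with exponent $2$; this contributes $\varepsilon_1$ times the target gradient-squared term plus a residual controlled by $\int_\Omega u^2 |\nabla v|^{2p-4}/v^{k-1}$. For the second term I would use Young's inequality with exponents $p$ and $p/(p-1)$; because $(k-p+1)/p + (p-1)(k+1)/p = k$, the $v$-weights split cleanly into $u^p/v^{k-p+1}$ and $|\nabla v|^{2p}/v^{k+1}$. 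For the $\nu$-term (and the residual coming out of the first step), I would use Young's inequality with exponents $p/2$ and $p/(p-2)$, which is admissible since $p\geq 3>2$; here the identity $2(k-p+1)/p + (p-2)(k+1)/p = k-1$ again balances the $v$-weights exactly. Choosing the Young parameters small enough in each step and collecting constants yields \eqref{eq-4-30}.

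The main obstacle will be the Young's inequality bookkeeping for the residual $u^2|\nabla v|^{2p-4}/v^{k-1}$: the algebraic identity $2(k-p+1)/p + (p-2)(k+1)/p = k-1$ is precisely what makes it possible to split this expression cleanly into the two target combinations, and the hypothesis $k\geq p-1$ enters at exactly this point to guarantee that the exponent $k-p+1$ of $v$ in the denominator of the $u^p$-term is nonnegative (so that the factorization is of Young type rather than the reverse type).
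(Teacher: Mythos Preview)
Your proposal is correct and follows essentially the same route as the paper's proof: integrate by parts to move $\nabla u$ off, substitute $\Delta v=\mu v-\nu u$, discard the nonpositive $-\mu$-term, and then apply Young's inequality to each of the three surviving pieces with exactly the weight splits you describe (the paper labels these pieces $J_1$, $-J_2$, $-J_3$ and treats them in the same order). One small remark: your claim that $k\ge p-1$ is what makes the Young factorization legitimate is not quite right---since $u,v>0$ the factorization and Young's inequality go through for any real exponent on $v$; the constraint $k\ge p-1$ is carried in the hypothesis because of how the lemma feeds into Proposition~\ref{main-prop2}, not because it is needed here.
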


\begin{proof}
 First,  we have that
\begin{align*}
    \int_{\Omega}\frac{|\nabla v|^{2p-4}}{v^{k-1}} \nabla u \cdot \nabla v
  = \underbrace{(k-1) \int_{\Omega}\frac{u|\nabla v|^{2p-2}}{v^{k}}}_{J_{1}} - \underbrace{\int_{\Omega}\frac{u|\nabla v|^{2p-4}}{v^{k-1}}\Delta v}_{J_{2}}   - \underbrace{ (p-2) \int_{\Omega} \frac{u |\nabla v|^{2p-6}}{v^{k-1}} \nabla v \cdot \nabla |\nabla v|^2}_{J_{3}}.
\end{align*}

Next, by Young's inequality, for every $B_1>0$, there exists a positive constant $A_1=A_1(k,p,B_1)>0$ such that
\begin{align}
\label{eq-4-31}
     J_1=(k-1) \int_{\Omega}\frac{u|\nabla v|^{2p-2}}{v^{k}}&=(k-1) \int_{\Omega} \frac{u}{v^{\frac{k-p+1}{p}}} \cdot  \frac{|\nabla v|^{2p-2}}{v^{\frac{(k+1)(p-1)}{p}}}\nonumber\\
     & \leq A_1 \int_{\Omega} \frac{u^p}{v^{k-p+1}}+ B_1 \int_{\Omega} \frac{|\nabla v|^{2p}}{v^{k+1}}.
\end{align}

Now, by the fact that $\Delta v =\mu v-\nu u$, and Young's inequality, for every $B_2>0$,
 there exists a positive constant $A_2=A_2(k,p,\nu,B_2)>0$ such that
\begin{align}
    \label{eq-4-32}
    {-J_2}=-\int_{\Omega}\frac{u|\nabla v|^{2p-4}}{v^{k-1}}\Delta v &=-\mu \int_{\Omega}\frac{u|\nabla v|^{2p-4}}{v^{k-2}} + \nu \int_{\Omega}\frac{u^2|\nabla v|^{2p-4}}{v^{k-1}} \nonumber\\
    & \leq \nu \int_{\Omega} \frac{u^2}{v^{\frac{2(k-p+1)}{p}}} \cdot \frac{|\nabla v|^{2p-4}}{v^{\frac{(p-2)(k+1)}{p}}} \nonumber\\
    & \leq A_2 \int_{\Omega} \frac{u^p}{v^{k-p+1}}+ B_2 \int_{\Omega} \frac{|\nabla v|^{2p}}{v^{k+1}}.
\end{align}

Finally, for every given   ${\varepsilon}_1>0$ and $B_3>0$,  there exist positive constants $A_3=A_3({\varepsilon}_1, k,p,\nu,B_3)>0$ and
$A_4=A_4(A_3,B_3)$  such that
\begin{align}
\label{eq-4-33}
    {-J_3}&= -(p-2) \int_{\Omega} \frac{u |\nabla v|^{2p-6}}{v^{k-1}} \nabla v \cdot \nabla |\nabla v|^2\nonumber\\
     & \leq  A_3 \int_{\Omega}\frac{u^2|\nabla v|^{2p-4}}{v^{k-1}} + {\varepsilon}_1 \int_{\Omega}\frac{|\nabla v|^{2p-6}}{v^{k-1}}\big |\nabla |\nabla v|^2\big|^2 \nonumber\\
    & \leq A_4 \int_{\Omega} \frac{u^p}{v^{k-p+1}}+ B_3 \int_{\Omega} \frac{|\nabla v|^{2p}}{v^{k+1}}+ {\varepsilon}_1 \int_{\Omega}\frac{|\nabla v|^{2p-6}}{v^{k-1}}\big |\nabla |\nabla v|^2\big|^2.
\end{align}
Combining  \eqref{eq-4-31}, \eqref{eq-4-32} and \eqref{eq-4-33}  with $B_1=B_2=B_3=\frac{1}{3} \varepsilon_2$,  we obtain \eqref{eq-4-30} with   $M_1=M_1({\varepsilon}_1,
  {\varepsilon}_2,k,p,\nu):=
  A_1+A_2+A_4$.
\end{proof}

In the following lemma, we provide some estimate for $\int_{\Omega} \frac{|\nabla v|^{2p-6}}{v^{k-1}}\big |\nabla |\nabla v|^2\big|^2$.

\begin{lemma}
\label{lem-4-6}
Let $p \ge 3$ and $k \ge p-1.$  For every  given $\varepsilon_1>0$ and $\varepsilon_2>0$, there are  $M_2=M_2(\varepsilon_1,\varepsilon_2,p,k,\nu)>0$ and $M_3=M_3(p,k,\varepsilon_1)>0$ such that
\begin{align*}
    (1-\varepsilon_1)  \int_{\Omega} \frac{|\nabla v|^{2p-6}}{v^{k-1}}\big |\nabla |\nabla v|^2\big|^2 \leq & M_2 \int_{\Omega} \frac{u^p}{v^{k-p+1}}  +\Bigg(\frac{4(k-1)^2}{(2p-3)^2}+\varepsilon_2\Bigg) \int_{\Omega} \frac{|\nabla v|^{2p}}{v^{k+1}}\nonumber\\
   & +\frac{4M_3-8 \mu}{2p-3} \int_{\Omega} \frac{|\nabla v|^{2p-2}}{v^{k-1}}
\end{align*}
{for all $t \in (0, T_{\rm max})$.}
\end{lemma}

\begin{proof}
First of all, recall \eqref {eq-4-43}, that is,
\begin{align}
\label{eq-4-43-0}
  (p-2)  \int_{\Omega} \frac{|\nabla v|^{2p-6}}{v^{k-1}}\big |\nabla |\nabla v|^2\big|^2 &= \int_{\partial \Omega} \frac{|\nabla v|^{2p-4}}{v^{k-1}} \frac{\partial |\nabla v|^2}{\partial \nu} + (k-1)\int_{\Omega} \frac{ |\nabla v|^{2p-4}}{v^{k}} \nabla v \cdot \nabla |\nabla v|^2 \nonumber\\
    & \quad -  \int_{\Omega} \frac{|\nabla v|^{2p-4}}{v^{k-1}}  \Delta |\nabla v|^2.
\end{align}
By Lemma \ref{lem-4-4},  {for every $\varepsilon_1>0$, there exists} $M_3=M_3(p,k, \varepsilon_1)>0$ such that
\begin{equation}
\label{eq-4-44}
     \int_{\partial \Omega} \frac{|\nabla v|^{2p-4}}{v^{k-1}} \frac{\partial |\nabla v|^2}{\partial \nu} \leq \frac{(2p-3)\varepsilon_1}{8} \int_{ \Omega} \frac{|\nabla v|^{2p-6}}{v^{k-1}}\big |\nabla |\nabla v|^2\big|^2 + M_3 \int_{ \Omega} \frac{|\nabla v|^{2p-2}}{v^{k-1}}.
\end{equation}
By \eqref{eq-4-45},
\begin{align*}
    -  \int_{\Omega} \frac{|\nabla v|^{2p-4}}{v^{k-1}}  \Delta |\nabla v|^2
   =-2 \mu \int_{\Omega} \frac{|\nabla v|^{2p-2}}{v^{k-1}}+2 \nu \int_{\Omega} \frac{|\nabla v|^{2p-4}}{v^{k-1}} \nabla u \cdot \nabla v -2  \int_{\Omega} \frac{|\nabla v|^{2p-4}}{v^{k-1}} |D^2v|^2.
\end{align*}
Note  that
\begin{equation*}
  \big |\nabla |\nabla v|^2\big|^2 = 4\sum_{i=1}^n |\nabla v\cdot\nabla v_{x_i} |^2 \leq 4 |\nabla v|^2 \sum_{i=1}^n {|\nabla v_{x_i}|^2}= 4|\nabla v|^2 |D^2v|^2.
\end{equation*}
Hence
\begin{equation*}
    -2\int_{\Omega} \frac{|\nabla v|^{2p-4}}{v^{k-1}} |D^2v|^2 \leq -\frac{1}{2} \int_{\Omega} \frac{|\nabla v|^{2p-6}}{v^{k-1}}\big |\nabla |\nabla v|^2\big|^2.
\end{equation*}
It then follows that
\begin{align}
    \label{eq-4-47}
    -  \int_{\Omega} \frac{|\nabla v|^{2p-4}}{v^{k-1}}  \Delta |\nabla v|^2  \leq& -2 \mu \int_{\Omega} \frac{|\nabla v|^{2p-2}}{v^{k-1}}+2 \nu \int_{\Omega} \frac{|\nabla v|^{2p-4}}{v^{k-1}} \nabla u \cdot \nabla v\nonumber\\
     & - \frac{1}{2}  \int_{\Omega} \frac{|\nabla v|^{2p-6}}{v^{k-1}}\big |\nabla |\nabla v|^2\big|^2.
\end{align}

Next, by Young's inequality, we have that
\begin{equation}
\label{eq-4-48}
    (k-1)\int_{\Omega} \frac{ |\nabla v|^{2p-4}}{v^{k}} \nabla v \cdot \nabla |\nabla v|^2 \leq \frac{(2p-3)}{4}\int_{\Omega}\frac{|\nabla v|^{2p-6}}{v^{k-1}}\big |\nabla |\nabla v|^2\big|^2 + \frac{(k-1)^2}{2p-3} \int_{\Omega}\frac{|\nabla v|^{2p}}{v^{k+1}}.
\end{equation}
Substituting \eqref{eq-4-44}, \eqref{eq-4-47} and \eqref{eq-4-48} into \eqref{eq-4-43-0} yields that
\begin{align*}
    (p-2)\int_{\Omega} \frac{|\nabla v|^{2p-6}}{v^{k-1}}\big |\nabla |\nabla v|^2\big|^2 & \leq \frac{(2p-3)\varepsilon_1}{8} \int_{ \Omega} \frac{|\nabla v|^{2p-6}}{v^{k-1}}\big |\nabla |\nabla v|^2\big|^2 + M_3 \int_{ \Omega} \frac{|\nabla v|^{2p-2}}{v^{k-1}}\nonumber\\
     &\quad + \frac{(2p-3)}{4}\int_{\Omega}\frac{|\nabla v|^{2p-6}}{v^{k-1}}\big |\nabla |\nabla v|^2\big|^2 + \frac{(k-1)^2}{2p-3} \int_{\Omega}\frac{|\nabla v|^{2p}}{v^{k+1}} \nonumber\\
    &\quad -2 \mu \int_{\Omega} \frac{|\nabla v|^{2p-2}}{v^{k-1}} +2 \nu \int_{\Omega} \frac{|\nabla v|^{2p-4}}{v^{k-1}} \nabla u \cdot \nabla v \nonumber\\
   &\quad  - \frac{1}{2}  \int_{\Omega} \frac{|\nabla v|^{2p-6}}{v^{k-1}}\big |\nabla |\nabla v|^2\big|^2,
\end{align*}
which implies
\begin{align}
    \label{eq-4-50}
     \left(1-\frac{\varepsilon_1}{2}\right) \int_{\Omega} \frac{|\nabla v|^{2p-6}}{v^{k-1}}\big |\nabla |\nabla v|^2\big|^2  \leq & \frac{4(k-1)^2}{(2p-3)^2} \int_{\Omega}\frac{|\nabla v|^{2p}}{v^{k+1}} + \frac{4M_3-8 \mu}{2p-3} \int_{\Omega} \frac{|\nabla v|^{2p-2}}{v^{k-1}}\nonumber\\
    & + \frac{8 \nu}{2p-3} \int_{\Omega} \frac{|\nabla v|^{2p-4}}{v^{k-1}} \nabla u \cdot \nabla v.
\end{align}
By Lemma \ref{lem-4-3}, {for every given  $\varepsilon_1>0$ and $\varepsilon_2>0$}, there is $M_2=M_2(\varepsilon_1,\varepsilon_2, p,k,\nu)$ such that
\begin{align*}
    \int_{\Omega}\frac{|\nabla v|^{2p-4}}{v^{k-1}} \nabla u \cdot \nabla v & \leq \frac{(2p-3)M_2}{8\nu}\int_{\Omega} \frac{u^p}{v^{k-p+1}} + \frac{(2p-3)\varepsilon_1}{16 \nu} \int_{\Omega}\frac{|\nabla v|^{2p-6}}{v^{k-1}}\big |\nabla |\nabla v|^2\big|^2\nonumber\\
    &\quad+ \frac{(2p-3)\varepsilon_2}{8 \nu} \int_{\Omega}\frac{|\nabla v|^{2p}}{v^{k+1}}.
\end{align*}
This together with \eqref{eq-4-50} completes the proof.
\end{proof}

Now we prove Proposition \ref{main-prop2}.

\begin{proof}[Proof of Proposition \ref{main-prop2}]
 By Lemma \ref{lem-4-5} and Lemma \ref{lem-4-6}, for every given  $\varepsilon_1>0$ and $\varepsilon_2>0$, there are $M_2, M_3>0$ such that
 \begin{align*}
    \int_{\Omega}\frac{|\nabla v|^{2p}}{v^{k+1}} &\leq \frac{(p-1)^2}{k^2} \int_{\Omega}\frac{|\nabla v|^{2p-6}}{v^{k-1}}\big |\nabla |\nabla v|^2\big|^2 + \frac{2 \mu}{k} \int_{\Omega}\frac{|\nabla v|^{2p-2}}{v^{k-1}}\\
    & \leq \frac{(p-1)^2}{k^2} \Bigg[\frac{\frac{4(k-1)^2}{(2p-3)^2}+\varepsilon_2}{1-\varepsilon_1} \int_{\Omega} \frac{|\nabla v|^{2p}}{v^{k+1}} + \frac{M_2}{1-\varepsilon_1} \int_{\Omega} \frac{u^p}{v^{k-p+1}}\\
    &\quad+ \frac{4M_3-8\mu}{(1-\varepsilon_1)(2p-3)} \int_{\Omega} \frac{|\nabla v|^{2p-2}}{v^{k-1}} \Bigg]+ \frac{2 \mu}{k} \int_{\Omega}\frac{|\nabla v|^{2p-2}}{v^{k-1}}\\
    & = \frac{(p-1)^2}{k^2} \Bigg[\frac{\frac{4(k-1)^2}{(2p-3)^2}+\varepsilon_2}{1-\varepsilon_1} \int_{\Omega} \frac{|\nabla v|^{2p}}{v^{k+1}} + \frac{M_2}{1-\varepsilon_1} \int_{\Omega} \frac{u^p}{v^{k-p+1}}\Bigg]\\
    &\quad+  \Bigg[\frac{2 \mu}{k} + \frac{(4M_3-8\mu)(p-1)^2}{(1-\varepsilon_1)(2p-3)k^2} \Bigg] \int_{\Omega}\frac{|\nabla v|^{2p-2}}{v^{k-1}}
\end{align*}
for all $t \in (0, T_{\rm max})$. Since $p-1 \leq k <  2p-2$, there exists positive constants $\varepsilon_1>0$ and $\varepsilon_2>0$ such that
\begin{equation*}
    \frac{(p-1)^2}{k^2} \cdot \frac{1}{1-\varepsilon_1} \cdot  \Bigg(\frac{4(k-1)^2}{(2p-3)^2}+\varepsilon_2 \Bigg) <1.
\end{equation*}
Therefore, we have for some $ M=M(p,k,\nu,\varepsilon,M_2)>0$ and $M_4>0$
$$\int_{\Omega} \frac{|\nabla v|^{2p}}{v^{k+1}} \leq \frac{M}{2} \int_{\Omega} \frac{u^p}{v^{k-p+1}}+M_4 \int_{\Omega}\frac{|\nabla v|^{2p-2}}{v^{k-1}}.$$
An application Young's inequality on the latter integral above entails that
\begin{align*}
   \int_{\Omega}\frac{|\nabla v|^{2p-2}}{v^{k-1}} &= \int_{\Omega} \frac{|\nabla v|^{2p-2}}{v^{\frac{(k+1)(p-1)}{p}}} \cdot  \frac{1}{v^{\frac{k-2p+1}{p}}}\\
   & \leq \frac{1}{2M_4} \int_{\Omega}\frac{|\nabla v|^{2p}}{v^{k+1}} + \frac{M^{**}}{2} \int_{\Omega} v^{2p-k-1},
\end{align*}
where  $M^{**}=M^{**}(p,k,\mu,\nu,\varepsilon_1,\varepsilon_2,M_2,M_3,M_4)>0$. Thus we have
$$\int_{\Omega} \frac{|\nabla v|^{2p}}{v^{k+1}} \leq M^* \int_{\Omega} \frac{u^p}{v^{k-p+1}}+M^{**}\int_{\Omega} v^{2p-k-1},$$
where
$$
M^*=M^*(p,k,\nu,\varepsilon_1,\varepsilon_2,M_2):=\frac{M_2}{1-\varepsilon_1}\frac{(p-1)^2}{k^2}\frac{1}{1- \frac{(p-1)^2}{k^2} \cdot \frac{1}{1-\varepsilon_1} \cdot  \Big(\frac{4(k-1)^2}{(2p-3)^2}+\varepsilon_2 \Big)}.
$$
The proposition thus follows.
\end{proof}

\subsection{Proof of Theorem \ref{main-thm1}}

In this subsection, we prove Theorem \ref{main-thm1}.

\begin{proof} [Proof of Theorem \ref{main-thm1}]
(1) First of all, { for the simplicity, we put $T_{\max}=T_{\max}^{\sigma}(u_0)$ and  $(u,v)=(u_{\sigma}(x,t;u_0),v_{\sigma}(x,t;u_0))$ for $x\in\Omega$ and $t\in [0,T_{\max})$. We let $p>\max\{2,n\}.$}

Recall that, multiplying the first equation in \eqref{main-eq} by $u^{p-1}$ and integrating  over $\Omega$ with respect to $x$, we have
\begin{align}
\label{u-diff-t-eq}
    \frac{1}{p} \cdot \frac{d}{dt} \int_{\Omega} u^p &=-(p-1)\int_{\Omega} u^{p-2}|\nabla u|^2 + (p-1) \chi \int_{\Omega} \frac{u^{p-1}}{v}\nabla u \cdot \nabla v \nonumber\\
    &\quad+\int_{\Omega} a(\cdot,t) u^p -  \int_{\Omega} b(\cdot,t) { u^{p+1+\sigma}} \quad \text{for all} \;\; t\in (0,T_{\max}).
\end{align}
Note that, by Proposition \ref{mass-persistence-prop}(1), for every $T>0$, there is $\delta=\delta(T)>0$ such that
\begin{equation}
    \label{v-delta*eq-1}
    v(x,t)\ge \delta \quad \forall\,\, x\in\Omega,\,\, t\in (0,\min\{T,T_{\max}\}).
\end{equation}
By Proposition \ref{main-prop2}, there are $C^*=C^*(p, \nu)>0$ and $C^{**}=C^{**}(p,\mu,\nu)$ such that \begin{equation}
\label{C-star-eq}
   \int_{\Omega} \frac{|\nabla v|^{2p+2}}{v^{p+1}} \leq C^* \int_{\Omega} u^{p+1}+C^{**}\int_{\Omega} v^{p+1}\quad \text{for all}\;\, t \in (0, T_{\max}).
\end{equation}
{By Lemma \ref{prelim-lm8}, for any $\varepsilon>0$, there is $C=C(\varepsilon,p)>0$ such that
\begin{equation}
\label{u-v-est-lp}
\int_\Omega v^{p+1}\le \varepsilon \int_\Omega u^{p+1}+ C(\varepsilon,p)\Big(\int_\Omega u\Big)^{p+1}\quad \text{for all} \;\; t\in (0,T_{\max}).
\end{equation}
By Young's inequality, for any $\varepsilon>0$, there is $\tilde C(\varepsilon,p)>0$ such that
\begin{equation}
\label{u-v-est-lp-1}
\Big(\int_\Omega u^{p+1}\Big)^{\frac{p}{p+1}}\int_\Omega u\le  \varepsilon \int_\Omega u^{p+1}+\tilde C(\varepsilon,p)\Big(\int_\Omega u\Big)^{p+1}.
\end{equation}
Therefore, by \eqref{aux-new-eq00}, \eqref{v-delta*eq-1}, \eqref{C-star-eq}, \eqref{u-v-est-lp}, and
 \eqref{u-v-est-lp-1}, we arrive at
\begin{align*}
   &\chi \int_{\Omega} \frac{u^{p-1}}{v}\nabla u \cdot \nabla v\\
&\le\int_\Omega u^{p-2}|\nabla u|^2+\frac{\chi^2}{4\delta} \Big(\int_\Omega u^{p+1}\Big)^{\frac{p}{p+1}}\Big(\int_\Omega \frac{|\nabla v|^{2p+2}}{v^{p+1}}\Big)^{\frac{1}{p+1}} \quad \qquad\qquad\qquad\quad  { \text{(by \eqref{aux-new-eq00}, \eqref{v-delta*eq-1})}}\nonumber  \\
&\le  \int_\Omega u^{p-2}|\nabla u|^2+\frac{\chi^2}{4\delta} \Big(\int_\Omega u^{p+1}\Big)^{\frac{p}{p+1}} \Big(C^*\int_\Omega u^{p+1}+C^{**}\int_\Omega v^{p+1}\Big)^{\frac{1}{p+1}}\quad\quad \,\,{ \text{(by \eqref{C-star-eq}})}\nonumber\\
&\le \int_\Omega u^{p-2}|\nabla u|^2+\frac{\chi^2 (C^*)^{\frac{1}{p+1}}}{4\delta} \int_\Omega u^{p+1} +\frac{\chi^2 (C^{**})^{\frac{1}{p+1}}}{4\delta}  \Big(\int_\Omega u^{p+1}\Big)^{\frac{p}{p+1}}\Big(\int_\Omega v^{p+1}\Big)^{\frac{1}{p+1}} \nonumber\\
&{ \le \int_\Omega u^{p-2}|\nabla u|^2+\frac{\chi^2 (C^*)^{\frac{1}{p+1}}}{4\delta} \int_\Omega u^{p+1}} \nonumber\\
&{\quad+\frac{\chi^2 (C^{**})^{\frac{1}{p+1}}}{4\delta}  \Big(\int_\Omega u^{p+1}\Big)^{\frac{p}{p+1}}\Big[ \varepsilon \int_\Omega u^{p+1}+ C(\varepsilon,p)\Big(\int_\Omega u\Big)^{p+1}\Big]^{\frac{1}{p+1}}  \quad\qquad\,\,\, \,   \text{by \eqref{u-v-est-lp}}}\nonumber\\
&{ \le \int_\Omega u^{p-2}|\nabla u|^2+\frac{\chi^2 (C^*)^{\frac{1}{p+1}}}{4\delta} \int_\Omega u^{p+1}} \nonumber\\
&{ \quad+\frac{\chi^2 (C^{**})^{\frac{1}{p+1}}}{4\delta}  \Big(\int_\Omega u^{p+1}\Big)^{\frac{p}{p+1}}\Big[( \varepsilon)^{\frac{1}{p+1}}\big(\int_\Omega u^{p+1}\big)^{\frac{1}{p+1}}+ \big(C(\varepsilon,p)\big)^{\frac{1}{p+1}}\int_\Omega u\Big]} \quad\qquad\,\,\,\\
&\le\int_\Omega u^{p-2}|\nabla u|^2+\frac{\chi^2}{4\delta}\Big\{ (C^*)^{\frac{1}{p+1}}+ \big(C^{**} \varepsilon\big)^{\frac{1}{p+1}}+\big(C^{**}C(\varepsilon, p)\big)^{\frac{1}{p+1}}\varepsilon\Big\} \int_\Omega u^{p+1} \nonumber\\
&\quad + \frac{\chi^2}{4\delta} \big(C^{**} C(\varepsilon, p)\big)^{\frac{1}{p+1}}\tilde C(\varepsilon,p)
\Big(\int_\Omega u\Big)^{{p+1}} \quad
\forall\, t\in (0,\min\{T,T_{\max}\})\quad\qquad  { \text{by \eqref{u-v-est-lp-1}}}
\end{align*}
This together with \eqref{u-diff-t-eq}, we arrive at
\begin{align}
\label{u-diff-t-eq-2}
\frac{1}{p} \cdot \frac{d}{dt} \int_{\Omega} u^p
    &\leq \frac{(p-1)\chi^2}{4\delta}\Big\{ (C^*)^{\frac{1}{p+1}}+ \big(C^{**} \varepsilon\big)^{\frac{1}{p+1}}+\big(C^{**} C(\varepsilon,p)\big)^{\frac{1}{p+1}}\varepsilon\Big\} \int_\Omega u^{p+1} \nonumber\\
    &\quad + \frac{(p-1)\chi^2 }{4\delta}
    \big(C^{**} C(\varepsilon, p)\big)^{\frac{1}{p+1}}\tilde  C(\varepsilon,p)\Big(\int_\Omega u\Big)^{{ p+1}}\nonumber\\
    &\quad  +a_{\sup}\int_{\Omega}u^p -  b_{\inf} \int_{\Omega} u^{p+1+\sigma}
 \quad \text{for all}\;\, t\in (0,\min\{T,T_{\max}\}).
\end{align}
Observe that, by Young's inequality, there is $C_1^*= C_1^*(p,b_{\inf},\varepsilon,\sigma,\mu,\nu,\delta,|\Omega|)>0$ such that
\begin{equation}
\label{eq-4-52}
    \frac{(p-1)\chi^2}{4\delta} \Big\{ (C^**)^{\frac{1}{p+1}}+ \big(C^{**} \varepsilon\big)^{\frac{1}{p+1}}+\big(C^{**} C(\varepsilon,p)\big)^{\frac{1}{p+1}}\varepsilon\Big\}\int_\Omega u^{p+1}\le \frac{b_{\inf}}{2}\int_\Omega u^{p+1+\sigma}+C_1^*,
\end{equation}
and by H\"older's inequality, there is $C_2^*=C_2^*(p,a_{\sup},b_{\inf},\varepsilon,\sigma,\mu,\nu,|\Omega|)>0$ such that
\begin{equation}
\label{eq-4-53}
\Big(\frac{1}{p}+a_{\sup}\Big)\int_{\Omega}u^p \leq \frac{b_{\inf}}{2}\int_\Omega u^{p+1+\sigma}+C_2^*,
\end{equation}
and by  Lemma \ref{prelim-lm4},
\begin{equation}
\label{eq-4-54}
    \frac{(p-1)\chi^2 }{4\delta}
    \big(C^{**} C(\varepsilon, p)\big)^{\frac{1}{{ p+1}}} \tilde C(\varepsilon,p)\Big(\int_\Omega u\Big)^{{ p+1}} \leq \frac{(p-1)\chi^2 }{4\delta}
    \big(C^{**}C\big)^{\frac{1}{p+1}} \tilde C(m_0^*)^{{p+1}},
\end{equation}
for all $t\in (0,\min\{T,T_{\max}\})$. Hence, with the help of \eqref{u-diff-t-eq-2}, \eqref{eq-4-52}, \eqref{eq-4-53} and \eqref{eq-4-54}, we get that
\begin{equation*}
   \frac{d}{dt} \int_{\Omega} u^p \leq  - \int_{\Omega} u^p + C \quad \text{for all}\,\,  t\in \big(0,\min\{T,T_{\max}\}\big)
\end{equation*}
for some $C>0$. Let us {denote} $y(t):= \int_{\Omega} u^p(t,x)dx$. Then we obtain
\begin{equation*}
    y'(t) \leq - y(t)+  C \quad \text{for all}\,\,  t\in \big(0,\min\{T,T_{\max}\}\big).
\end{equation*}
Thus, the comparison principle for scalar ODEs entails the boundedness of $y(t)$ on $(0, \min\{T,T_{\max}\}),$ that is,
\begin{equation*}
    \sup_{t\in [0,\min\{T,T_{\max}\})}\int_\Omega u^p<\infty.
\end{equation*}
This proves (1).

\medskip

(2)  Put $T_{\max}=T_{\max}^{\sigma}(u_0)$ and  $(u,v)=(u_{\sigma}(x,t;u_0),v_{\sigma}(x,t;u_0))$ for $x\in\Omega$ and $t\in [0,T_{\max})$.
By Proposition \ref{mass-persistence-prop}(2), there is $\delta>0$ such that
\begin{equation*}
    v(x,t)\ge \delta \quad \forall\,\, x\in\Omega,\,\, t\in (0,T_{\max}).
\end{equation*}
It  then follows from the arguments in (1)  that
\begin{equation*}
    \sup_{t\in [0,T_{\max})}\int_\Omega u^p<\infty.
\end{equation*}
This proves (2).

\medskip

(3) First, put $(u(x,t),v(x,t))=(u(x,t;u_0),v(x,t;u_0))$.  Let
$\beta^*=\chi+2-2\sqrt{\chi+1}$. Then
\begin{equation*}
    p^*:=\frac{4\beta^*}{(\chi-\beta^*)^2}=\frac{4(\chi+2-2\sqrt{\chi+1})}{(-2+2\sqrt{\chi+1})^2}=1,
\end{equation*}
and
\begin{equation*}
    \frac{(p^*+1)\beta^*\mu}{p^*}=2\beta^*\mu=2(\chi+2-2\sqrt{\chi+1})\mu=a_{\chi,\mu}<a_{\inf}.
\end{equation*}
 By the arguments of Lemma \ref{mass-lem-3-4}, for any $u_0\ge 0$ satisfying \eqref{u-0-assumption-eq},
we have
\begin{equation*}
    \frac{d }{dt}\int_\Omega  u^{-1}(x,t)dx\le -(a_{\inf}-a_{\chi,\mu})\int_\Omega u^{-1}(x,t)dx+b_{\sup}|\Omega| \quad \text{for all}\,\,   t\in (0,T_{\max}),
\end{equation*}
and then
\begin{equation*}
    \int_\Omega u^{-1} (x,t)dt\le \max\Big\{\int_\Omega u^{-1}(x,\tau_0)dx, \frac{b_{\sup}|\Omega|}{a_{\inf}-a_{\chi,\mu}}\Big\}= \frac{b_{\sup}|\Omega|}{(a_{\inf}-a_{\chi,\mu})C_{\chi}} \quad \text{for all}\,\,  \tau_0<t<T_{\max}.
\end{equation*}
This together with Lemma \ref{prelim-lm1} implies that
\begin{equation*}
    \int_\Omega u\ge \frac{(a_{\inf}-a_{\chi,\mu})C_\chi}{b_{\sup}}|\Omega|,\quad
v\ge \frac{\delta^*\nu  (a_{\inf}-a_{\chi,\mu})C_{\chi}}{b_{\sup}}|\Omega|
\quad \text{for all}\,\,   t\in [\tau_0,T_{\max}).
\end{equation*}
By the arguments of \eqref{u-diff-t-eq-2}, for any $p>\max\{2,n\}$, we  have
\begin{align*}
    \frac{1}{p} \cdot \frac{d}{dt} \int_{\Omega} u^p
    &\leq \frac{b_{\sup}|\Omega| (p-1)\chi^2\Big\{(C^*)^{\frac{1}{p+1}}+ \big(C^{**} \varepsilon\big)^{\frac{1}{p+1}}+\big(C^{**} C(\varepsilon,p)\big)^{\frac{1}{p+1}}\varepsilon\Big\}}{4\delta^*\nu  (a_{\inf}-a_{\chi,\mu})C_{\chi}} \int_\Omega u^{p+1} \nonumber\\
    &\quad + \frac{b_{\sup}|\Omega|(p-1)\chi^2 \big(C^{**} C(\varepsilon, p)\big)^{\frac{1}{p+1}} \tilde C(\varepsilon,p)}{4\delta^*\nu (a_{\inf}-a_{\chi,\mu})C_{\chi}}
    \Big(\int_\Omega u\Big)^{{p+1}}\nonumber\\
    &\quad  +a_{\sup}\int_{\Omega}u^p -  b_{\inf} \int_{\Omega} u^{p+1}
    \quad \text{for all}\,\,  t\in (\tau,T_{\max}).
\end{align*}
By the assumption \eqref{a-assumption-eq1},  there  are $p>\max\{2,n\}$ and $0<\varepsilon\ll 1$ such that
\begin{equation*}
    \frac{b_{\sup}|\Omega| (p-1)\chi^2\Big\{ (C^*(n,\nu))^{\frac{1}{p+1}}+ \big(C^{**}(n,\mu,\nu) \varepsilon\big)^{\frac{1}{p+1}}+\big(C^{**}(n,\mu,\nu)\tilde C(\varepsilon,p)\big)^{\frac{1}{p+1}}\varepsilon\Big\}}{4\delta^*\nu (a_{\inf}-a_{\chi,\mu})C_{\chi}}  <b_{\inf}.
\end{equation*}
Note that
\begin{equation*}
    \int_\Omega u^{p+1}\ge \frac{1}{|\Omega|}\Big(\int_\Omega u^p\Big)^{\frac{p+1}{p}}\quad \text{for all}\,\,  t\in (0,T_{\max}).
\end{equation*}
Therefore, there is $b_1>0$ such that
\begin{align*}
    \frac{1}{p} \cdot \frac{d}{dt} \int_{\Omega} u^p
    &\leq  \frac{b_{\sup}|\Omega|(p-1)\chi^2 \big(C^{**}(n,\mu,\nu) C(\varepsilon, p)\big)^{\frac{1}{p+1}} \tilde C(\varepsilon,p)}{4\delta^*\nu (a_{\inf}-a_{\chi,\mu})C_{\chi}}
\Big(\int_\Omega u\Big)^{{ p+1}}\nonumber\\
&\quad  +a_{\sup}\int_{\Omega}u^p -  b_1\Big( \int_{\Omega} u^{p}\Big)^{\frac{p+1}{p}}
\quad \text{for all}\,\,  t\in (\tau,T_{\max}).
\end{align*}
This together with comparison principle for scalar ODE's, Lemma \ref{prelim-lm4}, and the continuity of $\int_\Omega u^p(x,t;u_0)dx$ in $t\in [0,T_{\max})$ implies that
\begin{equation*}
    \sup_{0\le t<T_{\max}(u_0)}\|u(\cdot,t;u_0)\|_{L^p}<\infty.
\end{equation*}
This proves (3). }
\end{proof}

\section{Global existence and boundedness}

In this section, we study the global existence and prove Theorem \ref{main-thm2}. Throughout this section, we put
{ $u(x,t)=u_\sigma(x,t;u_0)$, $v(x,t)=v_\sigma(x,t;u_0)$, and $T_{\max}=T^{\sigma}_{\max}(u_0)$,}  and assume that $u_0$ satisfies \eqref{initial-cond-eq}.

\begin{proof}[Proof of Theorem \ref{main-thm2}]
(1) {For simplicity, let $T_{\max}=T_{\max}^{\sigma}(u_0)$ and  $(u,v)=(u_{\sigma}(x,t;u_0),v_{\sigma}(x,t;u_0))$ for $x\in\Omega$ and $t\in [0,T_{\max})$.} We prove Theorem \ref{main-thm2} (1)  by contradiction. Assume that $T_{\max}<\infty$.  Then by Proposition \ref{mass-persistence-prop}(1), there is $\delta>0$ such that
\begin{equation}
\label{eq-5-1}
    v(x,t) \ge \delta \quad \quad \text{\rm for all}\,\, x\in \Omega \quad \text{\rm and}\,\, t\in (0,T_{\max}).
\end{equation}
By  Proposition \ref{main-prop}, we then have
\begin{equation}
\label{eq-5-2}
\limsup_{t \nearrow T_{\max}} \left\| u(\cdot,t) \right\|_{C^0(\bar \Omega)}  =\infty.
\end{equation}
By Theorem \ref{main-thm1},  there is $\bar p>n$ such that
\begin{equation*}
    \sup_{0\le t< T_{\max}}\|u(\cdot,t)\|_{L^{\bar p}}<\infty.
\end{equation*}
This implies that
\begin{equation*}
    \sup_{0\le t< T_{\max}}\|u(\cdot,t)\|_{L^{ p}}<\infty\quad \forall\,\, 1\le p\le n.
\end{equation*}

Fix a $p$ such that $\frac{n}{2}<p<n$. Then one can find $q>n$  satisfying
\begin{equation*}
    \frac{1}{p}-\frac{1}{n} < \frac{1}{q},
\end{equation*}
which allows us to find a positive constant $ { h} \in (1,\infty)$ such that
\begin{equation*}
    \frac{1}{{ h}}<1-\frac{(n-p)q}{np}.
\end{equation*}

By the variation of constant formula and the comparison principle for parabolic equations, we have
\begin{align*}
    u(\cdot,t) & \leq \underbrace{ e^{-t A}u_0}_{u_1(\cdot,t)}- \underbrace{\chi \int_{0}^{t} e^{-(t-s)A} \nabla \cdot \Big(\frac{u(\cdot,s)}{v(\cdot,s)}\nabla v(\cdot,s) \Big)ds }_{u_2(\cdot,t)}\\
    &\quad + \underbrace{\int_{0}^{t} e^{-(t-s)A} u(\cdot,s){ \big(1+a_{\sup}- b_{\inf} u^{1+\sigma}(\cdot,s)\big)} ds}_{u_3(\cdot,t)} \quad \text{\rm for all}\,\, t\in (0,T_{\rm max}),
\end{align*}
where $A=-\Delta +I: \mathcal{D}(A)\subset L^p(\Omega)\to L^p(\Omega)$ with $\mathcal{D}(A)=\{u\in W^{2,p}(\Omega)\,|\, \frac{\p u}{\p n}=0$ on $\p \Omega\}$, and
$e^{-tA}$ is the analytic semigroup generated by $-A$ on $L^p(\Omega)$ (see section 2).

In the following, we give estimates for $u_i(x,t)$ ($i=1,2,3$). Note that
\begin{equation}
    \label{eq-5-3}
    \|u_1(\cdot,t)\|_{ L^{\infty}(\Omega)}   \leq  \|u_0\|_{L^{\infty}(\Omega)}\quad \quad \text{for all}\,\,  t\in [0,T_{\rm max}).
\end{equation}
Note also that there exist $c_0,c_1>0$ such that {
\begin{equation*}
    u(x,s)\big(1+a_{\sup}-b_{\inf} u^{1+\sigma}(x,s)\big) \leq c_0-c_1 u^2(x,s)
\end{equation*}}
for all $s \in (0,T_{\max})$, $x \in \bar \Omega$.  Therefore, we have that
\begin{equation}
    \label{eq-5-4}
    u_3(\cdot,t) \leq  c_0 \int_{0}^{t} e^{-(t-s)A}  ds \leq c_0 \int_{0}^{t} e^{-(t-s)}  ds \leq c_0 \quad \text{\rm for all}\,\, t\in [0,T_{\rm max}).
\end{equation}

By  H\"older's inequality and \eqref{eq-5-1}, we have that
\begin{equation}
    \label{eq-5-5}
     \Big \| \frac{u(\cdot,s)}{v(\cdot,s)}  \nabla v(\cdot,s)\Big \|_{L^q(\Omega)}  \leq \frac{1}{\delta}\cdot \| u\|_{L^{{q h}}(\Omega)} \cdot \| \nabla v \|_{L^{{ \frac{q h}{h-1}}}(\Omega)}
\end{equation}
for all $t \in (0, T_{\rm max})$. Note that
\begin{equation*}
    { \frac{qh}{ h-1}=\frac{q}{1-\frac{1}{ h}}}<\frac{q}{(\frac{1}{p}-\frac{1}{n})q}=\frac{1}{\frac{1}{p}-\frac{1}{n}}=\frac{np}{n-p}.
\end{equation*}
Then by Lemma \ref{prelim-lm7} and Theorem \ref{main-thm1}, we have that
\begin{equation}
    \label{eq-5-6}
     \| \nabla v(\cdot,t) \|_{L^{{\frac{q h}{ h-1}}}(\Omega)} \leq C\| \nabla v(\cdot,t) \|_{L^{{ \frac{np}{n-p}}}(\Omega)} \leq C \|u(\cdot,t)\|_{L^p(\Omega)} \leq M_p\quad \forall\, t\in [0,T_{\max}).
\end{equation}
By H\"older's inequality and { Lemma \ref{prelim-lm4}}, we have
\begin{align}
    \label{eq-5-7}
    \| u\|_{L^{q h}(\Omega)} & \leq \| u\|_{L^{1}(\Omega)}^{{ 1-\lambda}} \cdot \| u\|_{L^{\infty}(\Omega)}^{ { \lambda}}\nonumber\\
    &\leq {(m^*_0)^{ 1-\lambda}} \cdot \| u\|_{L^{\infty}(\Omega)}^{ { \lambda}}
\end{align}
for all $t \in (0,T_{\max})$, where $ { \lambda=1-\frac{1}{q h}} \in (0,1)$.
By \eqref{eq-5-6} and \eqref{eq-5-7}, we have
\begin{equation}
    \label{eq-5-8}
     \Big \| \frac{u(\cdot,t)}{v(\cdot,t)}  \nabla v(\cdot,t)\Big \|_{L^q(\Omega)}  \leq {\tilde C^*} \| u(\cdot,t)\|_{L^{\infty}(\Omega)}^{{\lambda}}
\end{equation}
for all $t \in (0, T_{\rm max})$ with some constant ${\tilde C^*=\tilde C^*}(M_p,{ m^*_0},C, \lambda,\delta,q,h,p,n)>0.$ Choose $\beta \in \big(\frac{n}{2p},1\big)$ and fix any $T\in (0,T_{\max})$. There are $C_1,C_2,C_3,C_4>0$ and $\gamma>0$ such that
\begin{align}
\label{eq-5-9}
&\|u_2(\cdot,t)\|_{ L^{\infty}(\Omega)}\nonumber\\
  & \leq C_1 \|A^\beta u_2(\cdot,t)\|_{L^p} \qquad \qquad\qquad {\rm (by \,\, Lemma \,\, \ref{prelim-lm2})}\nonumber\\
 & \le C_1  \chi \int_{0}^t \Big \| A^{\beta}e^{-(t-s)A}\nabla\cdot \Big( \frac{u(\cdot,s)}{v(\cdot,s)}  \nabla v(\cdot,s)\Big)\Big \|_{L^p(\Omega)}ds \nonumber\\
 & \le C_1  \chi \int_{0}^t \big \| A^{\beta}e^{-\frac{t-s}{2}A}\big \|_{L^p(\Omega)}\cdot  \Big\|e^{-\frac{t-s}{2}A}\nabla\cdot \Big( \frac{u(\cdot,s)}{v(\cdot,s)}  \nabla v(\cdot,s)\big)\Big \|_{L^p(\Omega)}ds \nonumber\\
&\leq C_2 \chi \int_{0}^t (t-s)^{-\beta} \big(1+(t-s)^{-\frac{1}{2}}\big) e^{-\gamma (t-s)} \Big \| \frac{u(\cdot,s)}{v(\cdot,s)}  \nabla v(\cdot,s)\Big \|_{L^q(\Omega)}ds\qquad{\rm (by\,\, Lemmas\,\, \ref{prelim-lm2}\,\, and\,\,   \ref{prelim-lm3})} \nonumber\\
&\leq C_3 \chi \int_{0}^t (t-s)^{-\beta-\frac{1}{2}}  e^{-\gamma (t-s)} \| u(\cdot,s)\|_{L^{\infty}(\Omega)}^{{\lambda}}ds \nonumber\\
&\leq C_3 \chi \int_{0}^\infty  (t-s)^{-\beta-\frac{1}{2}}  e^{-\gamma (t-s)} \cdot \sup _{ s \in (0,T)}\| u(\cdot,s)\|_{L^{\infty}(\Omega)}^{{\lambda}}ds \nonumber\\
&\leq C_4 \chi  \sup _{ s \in (0,T)}\| u(\cdot,s)\|_{L^{\infty}(\Omega)}^{ {\lambda}}\quad \text{for all}\;\; t\in (0,T).
\end{align}
 Note that
\begin{equation*}
    u(x,t)>0\quad \forall\,\, x\in \Omega,\,\, t\in (0,T_{\max}).
\end{equation*}
We then have
\begin{equation*}
    \|u(\cdot,t)\|_{ L^{\infty}(\Omega)}   \leq  \Big(c_0+ \|u_0\|_{L^{\infty}(\Omega)}\Big) + K \cdot \Big(\sup _{ s \in (0,T)}\| u(\cdot,s)\|_{L^{\infty}(\Omega)}\Big)^{{\lambda}}
\end{equation*}
for all $t \in (0, T)$, where $K>0$ is a  positive constant and { $\lambda \in (0,1)$.} This
implies that
\begin{equation*}
    \sup_{0\le t\le T} \|u(t,\cdot)\|_{L^\infty}\le \Big(c_0+ \|u_0\|_{L^{\infty}(\Omega)}\Big) + K \cdot \Big(\sup _{ t \in (0,T)}\| u(\cdot,t)\|_{L^{\infty}(\Omega)}\Big)^{{\lambda}}
\end{equation*}
for all $T\in (0,T_{\max})$. Therefore,
\begin{equation*}
    \frac{\Big(c_0+ \|u_0\|_{L^{\infty}(\Omega)}\Big)}{\sup_{0\le t\le T} \|u(t,\cdot)\|_{{L^\infty}(\Omega)}}+\frac{K}{\sup_{0\le t\le T} \|u(t,\cdot)\|_{{L^\infty}(\Omega)}^{1- {\lambda}}}\ge 1.
\end{equation*}
This implies that
\begin{equation*}
    \limsup_{t \nearrow T_{\max}} \left\| u(\cdot,t) \right\|_{L^\infty( \Omega)} <\infty,
\end{equation*}
which contradicts to \eqref{eq-5-2}. Therefore, $T_{\max}=\infty$ and the theorem is proved.

\medskip

(2) By (1), $T_{\max}=\infty$. By Proposition \ref{mass-persistence-prop} (2), there is $\delta>0$ such that
\begin{equation*}
    v(x,t)\ge \delta \quad \forall\,\, x\in\Omega,\,\, t\in (0,\infty).
\end{equation*}
By Theorem \ref{main-thm2}(2), there is $\bar p>n$ such that
\begin{equation*}
    \sup_{0\le t<\infty}\|u(\cdot,t)\|_{L^{\bar p}}<\infty.
\end{equation*}
It follows from the arguments in (1)  that
\begin{equation*}
    \sup_{0<t<\infty} \left\| u(\cdot,t) \right\|_{L^\infty( \Omega)} <\infty.
\end{equation*}

\medskip

{ (3) It follows from the arguments of (1)-(2).}
\end{proof}


\begin{thebibliography}{6}

\bibitem{Ama} H. Amann,  Existence and regularity for semilinear parabolic evolution equations,
 {\it Ann. Scuola Norm. Sup. Pisa Cl. Sci.} {\bf 11} (1984), no. 4, 593-676.

\bibitem{Bil} P. Biler,  Global solutions to some parabolic-elliptic systems of chemotaxis,
 {\it  Advanced Mathematics and Applications}, {\bf 9} (1999), 347-359.

 \bibitem{Bla} T. Black, Global generalized solutions to a parabolic-elliptic Keller-Segel system with singular sensitivity,
  {\it Discrete Contin. Dyn. Syst. Ser. S} {\bf 13} (2020) 119-137.




{\bibitem{BlFuLa}  T. Black, M. Fuest, and J. Lankeit,
Relaxed parameter conditions for chemotactic collapse in logistic-type parabolic-elliptic Keller-Segel systems,  https://arxiv.org/pdf/2005.12089.pdf}


\bibitem{ChTe} M.A.J. Chaplain and J.I. Tello, On the stability of homogeneous steady states of a chemotaxis system with logistic growth term, {\it  Appl. Math. Lett.}, {\bf 57} (2016), 1-6.

 {\bibitem{ChOuYa} M.   Choulli, El M. Ouhabaz, and M. Yamamoto,
Stable determination of a semilinear term in a parabolic equation,
{\it Commun. Pure Appl. Anal.}, {\bf 5} (2006), no. 3, 447-462.}

 \bibitem{DiWaZh} M.    Ding, W.  Wang, and S. Zhou,
  Global existence of solutions to a fully parabolic chemotaxis system with singular sensitivity and logistic source,
   {\it Nonlinear Anal. Real World Appl.} {\bf  49} (2019), 286-311.

  {\bibitem{Fue1} M. Fuest, Finite-time blow-up in a two-dimensional Keller-Segel system with an environmental dependent logistic source,
       {\it  Nonlinear Anal. Real World Appl.}, {\bf  52} (2020), 103022, 14 pp.}


   {\bibitem{Fue2} M. Fuest, Approaching optimality in blow-up results for Keller-Segel systems with logistic-type dampening,
   https://arxiv.org/pdf/2007.01184.pdf}




 \bibitem{Fuj} K. Fujie, Boundedness in a fully parabolic chemotaxis system with singular sensitivity,
 {\it  J.
Math. Anal. Appl.}, {\bf 424} (2015), 675-684.

\bibitem{FuSe1} K. Fujie and T. Senba, Global existence and boundedness in a parabolic-elliptic Keller-Segel system with general sensitivity, {\it Discrete Contin. Dyn. Syst. Ser. B},  {\bf 21} (2016), no. 1, 81-102.

{\bibitem{FuSe2} K. Fujie and T.  Senba,
 Global existence and boundedness of radial solutions to a two dimensional fully parabolic chemotaxis system with general sensitivity,
 {\it  Nonlinearity}, {\bf 29} (2016), no. 8, 2417-2450.}

{\bibitem{FuSe3} K. Fujie and T. Senba,  A sufficient condition of sensitivity functions for boundedness of solutions to a parabolic-parabolic chemotaxis system,
{\it  Nonlinearity},  {\bf 31} (2018), no. 4, 1639-1672.}


\bibitem{FuWiYo} K. Fujie, M. Winkler, and T. Yokota. Boundedness of solutions to parabolic-elliptic Keller-Segel systems with signal dependent sensitivity,
    {\it  Math. Methods Appl. Sci.}, {\bf 38(6)} (2015), 1212-1224.



\bibitem{FuWiYo1} K. Fujie, M.  Winkler, and T.  Yokota,  Blow-up prevention by logistic sources in a parabolic-elliptic Keller-Segel system with singular sensitivity,
    {\it  Nonlinear Anal.} {\bf 109} (2014), 56-71.




\bibitem{FuYo} K. Fujie and T. Yokota, Boundedness in a fully parabolic chemotaxis system with strongly singular sensitivity,
 {\it Appl. Math. Lett.} {\bf 38} (2014), 140-143.



\bibitem{Hen}
D. Henry,  Geometric Theory of Semilinear Parabolic Equations, Springer, Berlin,  Heidelberg, New York,
1977.

\bibitem{HeMeVe} M.A. Herrero, E. Medina, and J.J.L. Velzquez, Singularity patterns in a chemotaxis model,
{\it Math. Ann.}, {\bf 306} (1996), 583-623.


\bibitem{HeVe} M.A. Herrero and J.J.L. Velzquez, Finite-time aggregation into a single point in a reactiondiffusion system,
{\it Nonlinearity}, {\bf 10} (1997), 1739-1754.




\bibitem{ISM04}
M. Isenbach,  Chemotaxis. Imperial College Press, London (2004).




\bibitem{IsSh} T. B. Issa and W. Shen,  Dynamics in chemotaxis models of parabolic-elliptic type on bounded domain with time and space dependent logistic sources,
     {\it SIAM J. Appl. Dyn. Syst.} {\bf 16} (2017), no. 2, 926-973.



\bibitem{KS1970}
E. F. Keller and L. A.  Segel,   Initiation of slime mold aggregation viewed as an
instability, {\it  J. Theoret. Biol.}, {\bf  26} (1970),  399-415.


\bibitem{KS71}
E. F. Keller and L. A. Segel,  A model for chemotaxis, {\it  J.Theoret. Biol.}, {\bf  30} (1971), 225-234.





\bibitem{Lan0}
 J. Lankeit,  Chemotaxis can prevent thresholds on population density.
  {\it  Discr. Cont. Dyn. Syst. B},  {\bf 20} (2015), 1499-1527.

\bibitem{Lan} J. Lankeit, Eventual smoothness and asymptotics in a three-dimensional chemotaxis system with logistic source,
{\it J. Differential Eq.}, {\bf 258} (2015), 1158-1191.




\bibitem{DAL1991}
D. A. Lauffenburger, Quantitative studies of bacterial chemotaxis and microbial population dynamics,
{\it Microbial. Ecol.}, {\bf 22}(1991),  175-85.



  \bibitem{Mcl} W. McLean, Strongly Elliptic Systems and Boundary Integral Equations, Cambridge University Press, 2000.

\bibitem{MiSo} N. Mizoguchi and P. Souplet,  Nondegeneracy of blow-up points for the parabolic Keller-Segel system,
 {\it Ann. Inst. H. Poincaré Anal. Non Linéaire}, {\bf  31}  (2014), no. 4, 851-875.



\bibitem{Nag2} T. Nagai, Blowup of nonradial solutions to parabolic-elliptic systems modeling chemotaxis
in two-dimensional domains,
{\it J. Inequal. Appl.}, {\bf 6} (2001), 37-55.





\bibitem{NaSe3} T. Nagai and T. Senba, Global existence and blow-up of radial solutions to a parabolic-elliptic
system of chemotaxis,
{\it Adv. Math. Sci. Appl.}, {\bf 8} (1998), 145-156.




\bibitem{StSuWi} C. Stinner,  C.  Surulescu, and M.  Winkler,
Global weak solutions in a PDE-ODE system modeling multiscale cancer cell invasion,
{\it SIAM J. Math. Anal.} {\bf 46} (2014), no.3, 1969-2007.



{\bibitem{TaYo} Y. Tanaka, T. Yokota, Blow‐up in a parabolic–elliptic Keller–Segel system with density‐dependent sublinear sensitivity and logistic source, {\it Math Meth Appl Sci.},  {\bf 43} (2020),  7372– 7396.}





\bibitem{TaWi} Y. Tao, M. Winkler Persistence of mass in a chemotaxis system with logistic source,
{\it J. Differential Equations},  {\bf 259} (2015), 6142-6161.

\bibitem{Tel}
J.I. Tello,  Mathematical analysis and stability of a chemotaxis problem with a logistic growth term,
{\it  Math. Methods  Appl. Sci.},  {\bf 27} (2004), 1865-1880.

\bibitem{TeWi1}
J.I. Tello and W.  Winkler M, A chemotaxis system with logistic source, {\it Common Partial Diff. Eq.},
{\bf 32} (2007), 849-877.

\bibitem{Tri} H. Triebel, Interpolation Theory, Function Spaces, Differential Operators, North Holland, Amsterdam, 1978.


\bibitem{Vig} G. Viglialoro,  Very weak global solutions to a parabolic-parabolic chemotaxis-system with logistic source,
{\it J. Math. Anal. Appl.}, {\bf 439} (2016), no. 1, 197-212.

\bibitem{Win-1} M. Winkler,  Boundedness in the higher-dimensional parabolic-parabolic chemotaxis system with logistic source,
 {\it Comm. Partial Differential Equations}, {\bf 35} (2010), no. 8, 1516-1537.

\bibitem{Win0}
M. Winkler, Aggregation vs. global diffusive behavior in the higher-dimensional Keller-Segel model,
{\it  J.Differential Equations.}, {\bf 248} (2010),  2889-2905.


{\bibitem{Win5} Winkler, M. Finite-time blow-up in low-dimensional Keller–Segel systems with logistic-type superlinear degradation, {\it  Z. Angew. Math. Phys.},  {\bf 69}, 40 (2018), no. 2, Paper No. 69, 40 pp.}





\bibitem{Win1} M. Winkler, Finite-time blow-up in the higher-dimensional parabolic-parabolic Keller-Segel
system, {\it J. Math. Pures Appl.}, {\bf 100} (2013), 748-767.


\bibitem{Win2} M. Winkler, Blow-up in a higher-dimensional chemotaxis system despite logistic growth
restriction, {\it Journal of Mathematical Analysis and Applications}, {\bf 384} (2011), 261-272.



\bibitem{Win4} M. Winkler, Global solutions in a fully parabolic chemotaxis system with singular sensitivity,
{\it Math. Methods Appl. Sci.} {\bf 34} (2011), 176-190.


{\bibitem{Zha} X. Zhao, Boundedness to a logistic chemotaxis system with singular sensitivity, https://arxiv.org/abs/2003.03016.}


\bibitem{ZhZh} X. Zhao and S. Zheng, Global boundedness to a chemotaxis system with singular sensitivity and logistic source,
{\it Z. Angew. Math. Phys.} {\bf 68} (2017), no. 1, 13 pp.

\bibitem{ZhZh1} X. Zhao and S.  Zheng, Global existence and boundedness of solutions to a chemotaxis system with singular sensitivity and logistic-type source, {\it  J. Differential Equations} {\bf  267} (2019), no. 2, 826-865.

\end{thebibliography}
\end{document}